\newcommand{\norm}[1]{\left\Vert#1\right\Vert}
\newcommand{\set}[1]{\left\{#1\right\}}
\newcommand{\Real}{ \mathds{R} }
\newcommand{\Rn}{\Real^n}
\newcommand{\Rnx}{\Real^n_x}
\newcommand{\RnRt}{\Real_x^n\times\Real_t}
\newcommand{\RN}{\Real^N}
\newcommand{\To}{\rightarrow}
\newcommand{\dx}{\,\textrm{d}x}
\newcommand{\dt}{\,\textrm{d}t}
\newcommand{\ds}{\,\textrm{d}s}
\newcommand{\dS}{\,\textrm{d}S}
\newcommand{\bd}{\,\textrm{d}}
\newcommand{\dX}{\,\textrm{d}x\textrm{d}t}
\newcommand{\Lp}[1]{ \mathrm{L}^{#1} }
\newcommand{\Hk}[1]{ \mathrm{H}^{#1} }
\newcommand{\Hkz}[1]{ \mathrm{H}_0^{#1} }
\newcommand{\Grad}{\mathbf{\nabla}}
\newcommand{\Laplace}{\Delta}
\newcommand{\vu}{\mathbf{u}}
\newcommand{\vv}{\mathbf{v}}
\newcommand{\vw}{\mathbf{w}}
\newcommand{\pDom}{ {\Omega_T} }
\newcommand{\eDom}{\Omega}
\newcommand{\B}{\textrm{B}}
\newcommand{\Ball}[2]{\textrm{B}_{#2}(#1)}
\newcommand{\cyl}{Q}
\newcommand{\V}{V}
\newcommand{\vphi}{\phi}
\newcommand{\vpsi}{\psi}
\newcommand{\vPhi}{\Phi}
\newcommand{\Keywords}[1]{\noindent {Keywords}: #1\\} 
\newcommand{\MSC}[1]{\noindent {Mathematics Subject Classification (2000)}: #1\\}
\def\and{$ \cdot \ $}
\def\XXint#1#2#3{{\setbox0=\hbox{$#1{#2#3}{\int}$}
\vcenter{\hbox{$#2#3$}}\kern-.5\wd0}}
\def\Xiint#1{\mathchoice
{\XXiint\displaystyle\textstyle{#1}}
{\XXiint\textstyle\scriptstyle{#1}}
{\XXiint\scriptstyle\scriptscriptstyle{#1}}
{\XXiint\scriptscriptstyle\scriptscriptstyle{#1}}
\!\iint}
\def\XXiint#1#2#3{{\setbox0=\hbox{$#1{#2#3}{\iint}$}
\vcenter{\hbox{$#2#3$}}\kern-.5\wd0}}
\def\dashiint{\Xiint{\rule{1.2em}{0.5pt}}}
\def\lequiv{:=}
\def\u{u}
\def\v{v}
\def\w{w}
\numberwithin{equation}{section}
\theoremstyle{plain}
\newtheorem{theorem}{Theorem}[section]
\newtheorem{lemma}[theorem]{Lemma}
\theoremstyle{definition}
\newtheorem{remark}[theorem]{Remark}
\begin{document}
\title{Everywhere regularity of certain types of parabolic systems}
\author{Maxim Trokhimtchouk$^1$}

\thanks{$^1$Department of Mathematics, University of California, Berkeley California USA 94720. }
\thanks{The author would like to thank his thesis advisor L. C. Evans for his continuous support and helpful advise.}

\begin{abstract}
In this paper I discuss nonlinear parabolic systems that are generalizations
of scalar diffusion equations. More precisely, I consider systems of the form
$$
\vu_t -\Laplace\left[ \Grad\vPhi(\vu)\right] = 0 ,
$$
where $\vPhi(z)$ is a strictly convex function. I show that when $\vPhi$ is a function only of the norm of $\vu$, then bounded weak solutions of these parabolic systems are everywhere H\"older continuous and thus everywhere smooth. I also show that the method used to prove this result can be easily adopted to simplify the proof of the result due to Wiegner ~\cite{Wiegner92} on everywhere regularity of bounded weak solutions of strongly coupled parabolic systems. \\
\end{abstract}

\maketitle 
\Keywords{parabolic systems, regularity, PDEs, elliptic systems}
\MSC{35K40 \and 35K55 \and 35B65 \and 35B35 \and 35D10}
\tableofcontents

\section{Introduction}
The theory of regularity of nonlinear \emph{scalar} elliptic and parabolic
equations is by now classical. It goes back to the ground breaking work on
equations in divergence form by De Giorgi, Moser and Nash in the late fifties.
Since then, H\"older estimates for general nonlinear elliptic and parabolic
equations were derived by Krylov and Safonov. However, it became clear quite early on, after discovery of counter examples, that nonlinear elliptic and parabolic \emph{systems} do not, in general, possess \emph{everywhere regularity}. Instead, only \emph{partial regularity} results are available
~\cite{Giaquinta83}.

Despite the lack of everywhere H\"older continuity of weak solutions of general elliptic and parabolic systems, there are several nontrivial examples that do possess everywhere regularity due to their special structure. One of the earliest such examples of a fully nonlinear elliptic system whose weak solutions are everywhere H\"older continuous is due to Uhlenbeck ~\cite{Uhlenbeck77}. She considered elliptic systems of the form
$$
-(g (|\Grad \vu|^2)\u^i_{x_\alpha})_{x_\alpha} = 0 \quad \textrm{for }
i\in \set{1\ldots N},
$$
with some additional ellipticity and grows conditions on $g$. Uhlenbeck's
proof of H\"older continuity of weak solutions of these systems relies crucially on the existence of an auxiliary function which is subharmonic. This auxiliary function (\emph{entropy}) is tied to $|\Grad^2 \vu|^2$ by an inequality, and this allows local control on the second derivatives of $\vu$.

The parabolic examples of fully nonlinear systems possessing everywhere regularity followed somewhat later. Wiegner in ~\cite{Wiegner92} provided an original example of such a system. He showed everywhere H\"older continuity of weak solutions of \textbf{strongly coupled systems}
\begin{equation}\label{eq:strongly_coupled_begin}
\u^i_t - (a_{\alpha\beta}\u^i_{x_\beta} +
c^i_{\alpha\beta}H_{x_\beta})_{x_\alpha} = 0,
\end{equation}
with $H\lequiv H(\vu)$ a strictly convex function of $\vu$ and strict ellipticity conditions on the coefficients. His work has been followed by several others,
including Dung ~\cite{Dung99}, on strongly coupled system, and  Bae and
Choe ~\cite{BaeChoe04} on a parabolic analog of the example of Uhlenbeck. Unlike Uhlenbeck's work, however, none of the proofs of everywhere regularity for aforementioned parabolic examples rely explicitly on the existence of entropy.

In this paper I provide new examples of parabolic systems whose weak
solutions are everywhere H\"older continuous. These systems are a type of \textbf{nonlinear diffusion systems}, and are of the form
\begin{equation}\label{eq:diffusion_system}
\vu_t -\Laplace\left( \Grad\vPhi(\vu)\right) = 0
\end{equation}
where $\vPhi(z)$ is a strictly convex function. These systems are interesting since they generalize \textbf{scalar nonlinear diffusion} equations of the form
$$
\u_t - \Laplace ( \gamma(u)) = 0,
$$
where $\gamma$ is strictly increasing. I will show that if $\vPhi(z)$
depends only on the norm of $z$, then, together with some smoothness and
grows conditions on $\vPhi$, weak solutions of diffusion systems
\eqref{eq:diffusion_system} are everywhere H\"older continuous. I will do this by exhibiting an entropy, and showing that existence of entropy together with properties of general parabolic systems suffices to prove everywhere regularity. In the conclusion, I will show that regularity of bounded weak solutions to the strongly coupled systems \eqref{eq:strongly_coupled_begin} can also be obtained with the help of an entropy.


 Let me introduce notation that I will use throughout this
paper. I consider parabolic space to consist of space and time with space
being of $n$ dimensions and time of one dimension. In this paper I will deal
with cylindrical domains for simplicity. By elliptic domain $\eDom\subset \Rnx$ I
will mean space domain. Cylindrical parabolic domain $\eDom\times (0,
T)\subset \RnRt$, where $T>0$, will be denoted by $\pDom$. The time
interval of the cylindrical domain $\pDom$, that is the interval $(0,T)$ in
the case of $\pDom = \eDom\times (0,T)$, will be denoted by $I(\pDom)$.
By $\Ball{x}{r}$ I will mean an $n$ dimensional ball as a subset of $\Rnx$ with center at $x$ and
of radius $r$. $\cyl(x,t,r)$ will denote the cylinder
$$
\cyl(x,t,r)\lequiv\Ball{x}{r}\times (t-r^2,t).
$$
I will write $\B_r$ or $\B$ instead of
$\Ball{x}{r}$ when $x$ or $r$ are clear from the context. Similarly,
sometimes I will write $\cyl_r$ or $\cyl$ instead of $\cyl(x,t,r)$. For a
Lebesgue measurable set $S$ by $|S|$ I will mean Lebesgue measure of
$S$. Finally $V(\pDom; \Rn)$ will denote the closure of $C^1(\pDom; \Rn)$
functions under the norm
\begin{equation*}
\norm{v}_{V(\pDom)}^2 = \sup_{t\in I(\pDom)} \int_{\eDom}|v(x,t)|^2\dx +
\iint_{\pDom} |\Grad v(x,t)|^2\dx\dt.
\end{equation*}

\section{Partial regularity of nonlinear systems}
In this section we recall partial regularity results for weak
solutions of quasi-linear parabolic systems of the form
\begin{equation}\label{eq:nonlinear_system}
\u^i_t - (A^{\alpha\beta}_{ij}(x, \vu) \u^j_{x_\beta})_{x_\alpha}=0 \quad
\forall i \in \set{1\ldots N},
\end{equation}
where coefficients $A^{\alpha\beta}_{ij}$ satisfy  the following condition of
\textbf{strong ellipticity}:
\begin{equation}\label{eq:strong_ellipticity}
A^{\alpha\beta}_{ij}\xi^i_\alpha \xi^j_\beta \ge \lambda|\xi|^2
\textrm{ for all }\xi\in \Real^{nN}, \textrm{ for some }\lambda>0.
\end{equation}
By a weak solution in this case I mean a function $\vu\in
V(\pDom; \RN)$ that satisfies
$$
\iint_\pDom - \u^i\v^i_t + A^{\alpha\beta}_{ij} \u^j_{x_\beta}
\v^i_{x_\alpha}\dX = 0, \quad \textrm{for all }\vv\in \Hkz{1}(\pDom;\RN).
$$
It is well known that weak solutions of systems of this type possess partial
regularity under some appropriate continuity conditions on
$A^{\alpha\beta}_{ij}$. The important result in this area is the following
local regularity result due to Giaquinta and Struwe:
~\cite{GiaquintaStruwe82}
\begin{theorem}[Local regularity condition]\label{thm:local_regularity}
Suppose coefficients $A^{\alpha\beta}_{ij}$ satisfy condition
\eqref{eq:strong_ellipticity}, are continuous and bounded. Also
suppose $\vu\in \V(\pDom)$ is a weak solution of
\eqref{eq:nonlinear_system}. Then, if for some $(x_0, t_0)\in \pDom$
\begin{equation}\label{eq:local_regularity_condition}
\liminf_{R\To 0}\frac{1}{R^n}\iint_{Q(x_0, t_0 ,R)} |\Grad \vu|^2 \dX= 0,
\end{equation}
then $\vu$ is H\"older continuous in the neighborhood of $(x_0, t_0)$.
\end{theorem}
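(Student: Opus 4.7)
The plan is to prove H\"older continuity via the Campanato-type characterization: it suffices to establish a decay estimate of the form
\[
\frac{1}{R^n}\iint_{Q(x,t,R)} |\Grad\vu|^2 \dX \le C R^{2\alpha}
\]
for some $\alpha>0$ and all $(x,t)$ in a small parabolic neighborhood of $(x_0,t_0)$. Once this is in place, the usual parabolic Morrey/Campanato embedding produces H\"older continuity of $\vu$ itself with exponent $\alpha$. The hypothesis \eqref{eq:local_regularity_condition} is not a decay statement yet; it only provides smallness at some single scale, so the work is to bootstrap this into geometric decay.

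The main ingredient is a one-step decay lemma obtained by \emph{freezing} the coefficients. Given a cylinder $Q_R = \cyl(x_0,t_0,R)$, let $\bar A^{\alpha\beta}_{ij} = A^{\alpha\beta}_{ij}(x_0,\vu_{Q_R})$, where $\vu_{Q_R}$ is the mean of $\vu$ on $Q_R$, and let $\vv$ solve the constant-coefficient linear system
\[
\v^i_t - \bar A^{\alpha\beta}_{ij}\v^j_{x_\beta x_\alpha} = 0 \qquad \text{in } Q_R, \qquad \vv = \vu \text{ on } \partial_p Q_R.
\]
For such linear systems with constant coefficients one has classical Campanato-type decay
\[
\iint_{Q_r}|\Grad\vv|^2 \dX \le C\Bigl(\tfrac{r}{R}\Bigr)^{n+2}\iint_{Q_R}|\Grad\vv|^2\dX \quad\text{for } 0<r\le R.
\]
For the difference $\vw=\vu-\vv$, which vanishes on the parabolic boundary, a standard energy estimate using strong ellipticity \eqref{eq:strong_ellipticity} and the continuity of $A^{\alpha\beta}_{ij}$ yields
\[
\iint_{Q_R}|\Grad\vw|^2 \dX \le \sigma(R,\vu_{Q_R})\iint_{Q_R}|\Grad\vu|^2\dX,
\]
where $\sigma\to 0$ as $R\to 0$ provided the oscillation of $\vu$ (controlled, via Poincar\'e, by the same $|\Grad \vu|^2$-integral) is small. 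Adding the two estimates yields the one-step inequality
\[
\iint_{Q_r}|\Grad\vu|^2\dX \le \Bigl[C\bigl(\tfrac{r}{R}\bigr)^{n+2}+\sigma(R,\vu_{Q_R})\Bigr]\iint_{Q_R}|\Grad\vu|^2\dX.
\]

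The iteration now runs as usual: choose $\theta\in(0,1)$ small so that $C\theta^{n+2}\le\tfrac12\theta^n$, and then choose $R_0$ small so that $\sigma\le\tfrac12\theta^n$ on all relevant cylinders of radius $\le R_0$. The hypothesis \eqref{eq:local_regularity_condition} is exactly what lets us pick an initial radius $R_0$ on which $R_0^{-n}\iint_{Q_{R_0}}|\Grad\vu|^2$ is as small as we wish, so that Poincar\'e makes the oscillation of $\vu$ small and the continuity of $A^{\alpha\beta}_{ij}$ makes $\sigma$ small. Iterating on the geometric sequence $R_k=\theta^k R_0$ gives $R_k^{-n}\iint_{Q_{R_k}}|\Grad\vu|^2\le \theta^{2\alpha k}R_0^{-n}\iint_{Q_{R_0}}|\Grad\vu|^2$ for some $\alpha>0$, i.e.\ the desired power decay at the base point.

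The main obstacle is propagating this decay from the single point $(x_0,t_0)$ to a full neighborhood, since Campanato requires uniform decay. This is handled by the standard absolute continuity/openness argument: the map $(x,t)\mapsto R^{-n}\iint_{Q(x,t,R)}|\Grad\vu|^2\dX$ is continuous for each fixed $R$, so the smallness condition used to start the iteration persists on a parabolic neighborhood of $(x_0,t_0)$; the iteration then runs uniformly there, yielding the Campanato decay on that neighborhood and hence H\"older continuity.
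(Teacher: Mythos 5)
The paper does not actually prove this theorem; it is quoted from Giaquinta--Struwe \cite{GiaquintaStruwe82}, and the architecture you propose (freezing the coefficients, comparison with a constant-coefficient caloric function, one-step excess decay, iteration started by the $\liminf$ hypothesis, and propagation of smallness to a neighborhood via continuity of $(x,t)\mapsto R^{-n}\iint_{Q(x,t,R)}|\Grad\vu|^2\dX$ for fixed $R$) is indeed the one used there. So the skeleton is right.

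There is, however, a genuine gap in the comparison step. Testing the difference of the two systems with $\vw=\vu-\vv$ gives $\lambda\iint_{Q_R}|\Grad\vw|^2\dX \le C\iint_{Q_R}|A(x,\vu)-\bar A|^2|\Grad\vu|^2\dX$, and continuity of the coefficients only makes $|A(x,\vu)-\bar A|$ small where $|\vu-\vu_{Q_R}|$ is \emph{pointwise} small. Poincar\'e gives smallness of $\vu-\vu_{Q_R}$ only in $\Lp{2}$-mean, so there remains an exceptional set $E$ of small measure on which $|A-\bar A|$ is merely bounded, and one must absorb $\iint_{E}|\Grad\vu|^2\dX$. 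This requires $|\Grad\vu|\in\Lp{p}_{loc}$ for some $p>2$ together with the reverse H\"older estimate --- precisely Lemma \ref{lem:lp_estimate} of this paper, which your argument never invokes but cannot do without; with only $\Lp{2}$ information the claimed $\sigma(R,\vu_{Q_R})\to 0$ does not follow. A second, smaller omission: the Poincar\'e inequality $\iint_{Q_R}|\vu-\vu_{Q_R}|^2\dX\le CR^2\iint_{Q_R}|\Grad\vu|^2\dX$ is false for general functions in $\V(\pDom)$, since the spatial gradient does not control oscillation in time; one must use the equation itself (a parabolic Poincar\'e inequality for solutions, proved via Steklov averages) to bound the variation of the slice means in $t$. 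The same caveat applies at the end, when converting the gradient decay into H\"older continuity of $\vu$ through the parabolic Campanato embedding. With these two ingredients supplied, the proof closes.
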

Condition \eqref{eq:local_regularity_condition} is the basis for the proofs of
everywhere regularity that we will discuss in the rest of this paper. We will, however, need one more result due to Giaquinta and Struwe ~\cite{GiaquintaStruwe82}.
\begin{lemma}[$\Lp{p}$ estimate]\label{lem:lp_estimate}
Let $\vu$ be a weak solution of the system \eqref{eq:nonlinear_system}. Then there exists an exponent $p>2$ such that $|\Grad \vu|\in \Lp{p}_{loc}(\pDom)$; moreover for all $Q_R\subset Q_{4R} \subset \pDom$ we have
\begin{equation}\label{eq:Lp_estimate}
\left(\dashiint_{Q_R} |\Grad \vu|^p\dX\right)^{1/p}\leq C \left(\dashiint_{Q_{4R}} |\Grad \vu|^2\dX\right)^{1/2}.
\end{equation}
\end{lemma}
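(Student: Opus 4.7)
The plan is to follow the standard Gehring-type argument as adapted to the parabolic setting by Giaquinta and Struwe. The result is local, so I work on a cylinder $Q_{2R}\subset \pDom$ and aim to establish a reverse H\"older inequality of the form
\begin{equation*}
\left(\dashiint_{Q_R} |\Grad\vu|^2\dX\right)^{1/2} \le C\left(\dashiint_{Q_{2R}} |\Grad\vu|^{2\theta}\dX\right)^{1/(2\theta)}
\end{equation*}
for some exponent $\theta\in(0,1)$ independent of $\vu$ and $R$. Once such an inequality is in hand on every sub-cylinder, the parabolic version of Gehring's lemma yields a $p>2$ and the quantitative estimate \eqref{eq:Lp_estimate}.

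The first step is the Caccioppoli inequality. I would test the weak formulation against $\vv = \eta^2(\vu - \vu_0)$, where $\eta$ is a smooth cutoff supported in $Q_{2R}$ that equals one on $Q_R$, is spatially Lipschitz, has bounded $\eta_t$, and $\vu_0\in\RN$ is an appropriate constant (a time-dependent spatial average, handled by a Steklov regularization to justify the time derivative). Using strong ellipticity \eqref{eq:strong_ellipticity} to absorb $|\Grad\vu|^2$, boundedness of the coefficients to control the cross terms via Young's inequality, and integration by parts in $t$ to produce the time supremum, I obtain
\begin{equation*}
\sup_{t\in I(Q_R)}\int_{B_R}|\vu-\vu_0|^2\dx + \iint_{Q_R}|\Grad\vu|^2\dX \le \frac{C}{R^2}\iint_{Q_{2R}}|\vu-\vu_0|^2\dX.
\end{equation*}

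The second step converts the right-hand side into something involving only $\Grad\vu$ on a larger cylinder. Using the parabolic Sobolev embedding
\begin{equation*}
\iint_{Q_{2R}}|f|^{2(n+2)/n}\dX \le C\left(\sup_t\int_{B_{2R}}|f|^2\dx\right)^{2/n}\iint_{Q_{2R}}|\Grad f|^2\dX,
\end{equation*}
together with the spatial Sobolev--Poincar\'e inequality $\|f\|_{L^{2n/(n+2)}(B_{2R})}\lesssim R\|\Grad f\|_{L^2(B_{2R})}$ (interpolated in the $(x,t)$-variables and applied with $\vu_0$ chosen as a suitable average of $\vu$), I upgrade $|\vu-\vu_0|$ estimates to a reverse H\"older inequality with exponent $2\theta<2$ on the right. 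The most delicate point is the choice of $\vu_0$ and a covering argument ensuring the inequality holds on every cylinder with a uniform constant; this is where the Giaquinta--Struwe machinery (essentially a parabolic Poincar\'e inequality adapted to averages in cylinders, replacing the elliptic version used in Gehring's original argument) does the real work.

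Finally, with the reverse H\"older inequality established uniformly on all sub-cylinders $Q_r\subset Q_{2r}\subset Q_{4R}$, I apply the parabolic Gehring lemma of Giaquinta--Struwe to deduce the existence of $p>2$ and the quantitative bound \eqref{eq:Lp_estimate}. The main obstacle is purely technical: justifying the use of $\vu$ itself in the weak formulation as a test function (which requires a Steklov-in-time mollification since $\vu_t$ need not exist pointwise) and keeping track of the anisotropic scaling between space and time throughout the Sobolev--Poincar\'e step.
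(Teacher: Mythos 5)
The paper does not prove this lemma at all: it is quoted verbatim as a known result of Giaquinta and Struwe \cite{GiaquintaStruwe82}, so there is no in-paper argument to compare against. Your outline (Caccioppoli inequality via a cut-off and Steklov regularization, parabolic Sobolev--Poincar\'e to obtain a reverse H\"older inequality with exponent below $2$, then the parabolic Gehring lemma) is precisely the standard Giaquinta--Struwe argument and is correct as a proof sketch, relying only on the strong ellipticity \eqref{eq:strong_ellipticity} and boundedness of the coefficients, as it should.
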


\section{Generalized diffusion equations}
In this section I discuss the type of parabolic systems I will refer to as
\textbf{diffusion system}. Let $\vPhi:\RN\To\Real$ be a strictly convex, twice continuously differentiable function with
\begin{equation}\label{eq:strict_convexity}
\lambda |\xi|^2 \le \vPhi_{z_i z_j} \xi^i \xi^j \leq \Lambda |\xi|^2.
\end{equation}
Then we say that $\vu$ is a weak solution of
\begin{equation}\label{eq:gen_diff_eq}
\vu_t - \Laplace\left[\Grad\vPhi(\vu)\right] = 0
\end{equation}
if $\vu\in \V(\pDom; \RN)$ and for
all $\vw \in \Hkz{1}(\pDom; \RN)$
\begin{equation}
\iint_\pDom -\u^i\w^i_t + (\vPhi_{z_i}(\vu))_{x_\alpha}
\w^i_{x_\alpha} \dX = 0.
\end{equation}
This is a standard quasi-linear elliptic system of the type in
(\ref{eq:nonlinear_system}), since we can rewrite it as
\begin{equation}
\u^i_t + \left(\vPhi_{z_i z_j}(\vu)\u^j_{x_\alpha}\right)_{x_\alpha} = 0
\quad \textrm{for all }i\in \set{1\ldots N}.
\end{equation}
If in addition $\vu$ is bounded I say $\vu$ is bounded weak solution.

As I have mentioned before this equation is a generalization of scalar nonlinear diffusion equation. It has several nice properties. First, its flow is a contraction in $\Hk{-1}(\eDom)$ and its solutions are unique. Second, weak solutions of \eqref{eq:gen_diff_eq} are in $\Hk{1}_{loc}(0,T; \Lp{2}_{loc}(\eDom))$, that is their weak derivatives in time are in $\Lp{2}_{loc}(\pDom)$. Furthermore, if the solution is bounded, the gradient in $x$ is actually in $\Lp{4}_{loc}(\pDom)$.

First I show that flow is a contraction in $\Hk{-1}(\eDom)$.
\begin{theorem}[Uniqueness]\label{thm:flow_contraction}
Let $\u_0, \u_1\in \V(\pDom; \RN)$ be two weak solutions of \eqref{eq:gen_diff_eq} with the same boundary conditions, that is $u_0(\cdot,t)\equiv u_1(\cdot,t)$ on $\partial \eDom$ for almost all $t\in [0,T]$. Denote by $i:\Lp{2}(\eDom) \To \Hk{-1}(\eDom)$ the natural embedding of square integrable functions in $\Hk{-1}$ defined by
$$
i(f)(\phi) := \int_\eDom f\phi \dx.
$$
Then we have for $T \geq t_1 \geq t_0 \geq 0$
\begin{equation}\label{eq:contraction}
\norm{i(\u_0(t_1)-\u_1(t_1))}_{\Hk{-1}(\eDom)}\leq e^{\lambda(t_1-t_0)}\norm{i(\u_0(t_0)-\u_1(t_0))}_{\Hk{-1}(\eDom)},
\end{equation}
where $\lambda$ as in \eqref{eq:strict_convexity}.
\end{theorem}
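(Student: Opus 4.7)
\emph{Proof plan.} This is the classical $\Hk{-1}$ monotonicity argument for gradient-type parabolic evolutions. Put $\vw \lequiv \u_0 - \u_1$. Subtracting the two weak formulations, $\vw$ satisfies
\begin{equation*}
\vw_t - \Laplace\bigl[\Grad\vPhi(\u_0) - \Grad\vPhi(\u_1)\bigr] = 0,
\end{equation*}
and the shared boundary data yield $\vw(\cdot,t) \in \Hkz{1}(\eDom;\RN)$ for a.e.\ $t$. For each such $t$ define $\vpsi(\cdot,t) \in \Hkz{1}(\eDom;\RN)$ componentwise by the Poisson problem $-\Laplace\psi^i = \w^i$ in $\eDom$, $\psi^i = 0$ on $\partial\eDom$, so that $\norm{i(\vw(t))}_{\Hk{-1}(\eDom)}^2 = \int_\eDom |\Grad\vpsi|^2\dx$.

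The plan is to test the equation for $\vw$ against $\vpsi$. Using $-\Laplace\psi^i = \w^i$ and $\psi^i|_{\partial\eDom} = 0$, two spatial integrations by parts give the time-derivative identity
\begin{equation*}
\int_\eDom \w^i_t\, \psi^i \dx = \tfrac12 \tfrac{d}{dt} \int_\eDom |\Grad\vpsi|^2 \dx.
\end{equation*}
Since $\u_0 = \u_1$ and hence $\vPhi_{z_i}(\u_0) = \vPhi_{z_i}(\u_1)$ on $\partial\eDom$, two further integrations by parts handle the elliptic term:
\begin{equation*}
-\int_\eDom \Laplace\bigl[\vPhi_{z_i}(\u_0) - \vPhi_{z_i}(\u_1)\bigr]\psi^i \dx = \int_\eDom \bigl[\vPhi_{z_i}(\u_0) - \vPhi_{z_i}(\u_1)\bigr]\w^i \dx.
\end{equation*}
Writing $\vPhi_{z_i}(\u_0) - \vPhi_{z_i}(\u_1) = \int_0^1 \vPhi_{z_i z_j}(\u_1 + s\vw)\, \w^j \ds$ and using the lower bound in \eqref{eq:strict_convexity}, this last integrand is pointwise at least $\lambda|\vw|^2$. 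Combining the two identities produces the key differential inequality
\begin{equation*}
\tfrac12 \tfrac{d}{dt}\norm{i(\vw(t))}_{\Hk{-1}(\eDom)}^2 + \lambda \int_\eDom |\vw|^2 \dx \leq 0,
\end{equation*}
so $t \mapsto \norm{i(\vw(t))}_{\Hk{-1}(\eDom)}$ is non-increasing; since $e^{\lambda(t_1-t_0)} \geq 1$, this is strictly stronger than \eqref{eq:contraction}.

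The only real obstacle is rigorously justifying the time differentiation above, since $V(\pDom)$-regularity does not by itself give a pointwise-defined $\vw_t$. The standard workaround is to replace $\vw$ by its Steklov average $\vw_h(x,t) \lequiv \tfrac{1}{h}\int_t^{t+h} \vw(x,s)\ds$, which satisfies the mollified equation
\begin{equation*}
(\vw_h)_t - \Laplace\bigl[\Grad\vPhi(\u_0) - \Grad\vPhi(\u_1)\bigr]_h = 0
\end{equation*}
strongly for a.e.\ $t$. Running the computation above on $\vw_h$ tested against the Poisson extension of $\vw_h$, integrating over $[t_0,t_1]$, and sending $h \to 0$ using strong $\Lp{2}$-convergence of $\vw_h$ and continuity of $\Grad\vPhi$ recovers the monotonicity for a.e.\ pair of times; weak $\Hk{-1}$-continuity of $t\mapsto \vw(t)$ then extends it to all $0\leq t_0 \leq t_1 \leq T$.
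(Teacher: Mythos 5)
Your proposal is correct and follows essentially the same route as the paper: test the equation for $\vw=\u_0-\u_1$ against its inverse Dirichlet Laplacian (justified via Steklov averages), and use the monotonicity $(\Grad\vPhi(\u_0)-\Grad\vPhi(\u_1))\cdot\vw\ge\lambda|\vw|^2$ coming from \eqref{eq:strict_convexity}. The only difference is that you discard the dissipation term $\lambda\int|\vw|^2$ to get plain monotonicity of $t\mapsto\norm{i(\vw(t))}_{\Hk{-1}}$ (which indeed implies \eqref{eq:contraction} since $e^{\lambda(t_1-t_0)}\ge1$), whereas the paper keeps it, inserts the weight $e^{2\lambda t}$, and bounds $\norm{\vw}_{\Hk{-1}}^2$ by $\norm{\vw}_{\Lp{2}}^2$ to obtain an exponential decay.
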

\begin{proof}
Let us denote by $f_h$ the Steklov average of $f$ defined as
$$
f_h(x,t):=\int_t^{t+h} f(x,s)\ds.
$$
Also for simplicity we will write $\vv_k$ for $\Grad\vPhi(\vu_k)$ with $k=1,2$. It is not very hard to show that $(\vu_k)_h$ and $(\vv_k)_h$ weakly satisfy
\begin{equation}\label{eq:milified_eq}
((\vu_k)_h)_t - \Laplace((\vv_k)_h)=0.
\end{equation}
Let us denote the solution of
$$
\Laplace w = f, \quad w\equiv 0\textrm{ on }\eDom
$$
by $\Laplace^{-1}f$. Then the $\Hk{-1}(\eDom)$ norm of $i(f)$ is given by
$$
\norm{i(f)}_{\Hk{-1}(\eDom)}^2 = \int_\eDom |\Grad (\Laplace^{-1} f)|^2\dx.
$$
For simplicity for an $f\in \Lp{2}(\eDom)$ let us write $\norm{f}_{\Hk{-1}}$ instead of $\norm{i(f)}_{\Hk{-1}(\eDom)}$. Now fix $h$. For $t_0, t_1 \in (h, T-h)$ we compute
\begin{equation}\label{eq:uniquenes_mollify}
\left(e^{2\lambda t}\norm{(\vu_1)_h - (\vu_0)_h}^2_{\Hk{-1}}\right)\Big|_{t_0}^{t_1}   = I_1 + I_2,
\end{equation}
where
\begin{equation*}
I_1 = \int_{t_0}^{t_1}2\lambda e^{2\lambda t}\norm{(\vu_1)_h - (\vu_0)_h}^2_{\Hk{-1}}\Big|_{t}dt,
\end{equation*}
and
\begin{equation*}
I_2 = \int_{t_0}^{t_1} \int_\eDom 2 e^{2\lambda t}\Grad\Laplace^{-1}( (\vu_1)_h - (\vu_0)_h)\cdot \Grad\Laplace^{-1}( (\vu_1)_h - (\vu_0)_h)_t \dX.
\end{equation*}
Using the equation and the fact that $\vu_0$ and $\vu_1$ have the same trace, for $I_2$ we obtain
\begin{equation*}
I_2  =  -\int_{t_0}^{t_1} \int_\eDom 2 e^{2\lambda t} ( (\vu_1)_h - (\vu_0)_h)\cdot ( (\vv_1)_h - (\vv_0)_h)\dX
\end{equation*}
Taking the limit of both sides of the equation \eqref{eq:uniquenes_mollify} as $h\To 0$ we get
\begin{equation}
\left(e^{2\lambda t}\norm{(\vu_1 - \vu_0}^2_{\Hk{-1}}\right)\Big|_{t_0}^{t_1} = I_3 + I_4,
\end{equation}
where
\begin{equation*}
I_3 = \int_{t_0}^{t_1}2\lambda e^{2\lambda t}\norm{\vu_1 - \vu_0}^2_{\Hk{-1}}\Big|_{t}dt,
\end{equation*}
and
\begin{eqnarray*}
I_4 & = & -\int_{t_0}^{t_1} \int_\eDom 2 e^{2\lambda t} ( \vu_1 - \vu_0)\cdot ( \vv_1 - \vv_0)\dX\\
& \leq & -\int_{t_0}^{t_1} \int_\eDom 2\lambda e^{2\lambda t} |\vu_1 - \vu_0|^2\dX\\
& \leq & -\int_{t_0}^{t_1} 2 \lambda e^{2\lambda t} \norm{\vu_1 - \vu_0}^2_{\Hk{-1}}\Big|_{t}dt.
\end{eqnarray*}
Thus we see that $I_3+I_4 \leq 0$ and we establish the result for $T > t_1 \geq t_0 > 0$. We establish the Theorem by taking the limit as $t_0\To 0$ and $t_1\To T$.
\qed\end{proof}
Let us now derive the $\Hk{2}$ estimates for the solutions of \eqref{eq:gen_diff_eq}. In general, quasi-linear parabolic systems do not have $\Hk{2}$ estimates and the fact that solutions of generalized diffusion equations do have them is the consequence of the special structure that equation \eqref{eq:gen_diff_eq} possesses.
\begin{theorem}[$\Hk{2}$ estimates]\label{H2_estimate}
Let $\u\in \V(\pDom; \RN)$ be a weak solution of \eqref{eq:gen_diff_eq}. Then $\u_t\in \Lp{2}_{loc}(\pDom)$ and for $\cyl(x,t,r)\subsetneq \cyl(x,t,R) \subset \pDom$ we have the following estimates
\begin{equation}\label{eq:H2_estimate}
\iint_{\cyl(x,t,r)} |\vu_t|^2\dX 
\leq \frac{C}{(R-r)^2} \iint_{\cyl(x,t,R)} |\Grad\vu|^2 \dX,
\end{equation}
\begin{equation}\label{eq:Hessian_estimate}
\iint_{\cyl(x,t,r)} |\Grad^2 (\Grad_z\vPhi(\vu))|^2\dX \leq \frac{C}{(R-r)^2} \iint_{\cyl(x,t,R)} |\Grad\vu|^2 \dX.
\end{equation}
\end{theorem}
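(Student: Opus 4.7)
The plan is to exploit the special structure of the system by working with the auxiliary variable $\vv := \Grad_z \vPhi(\vu)$, so that the equation becomes $\vu_t = \Laplace\vv$. The key identity is $\vv_t^i = \vPhi_{z_iz_j}(\vu)\u_t^j$, which, combined with the strict convexity \eqref{eq:strict_convexity}, yields the pointwise control
\begin{equation*}
\vu_t\cdot\vv_t = \vPhi_{z_iz_j}(\vu)\u_t^i\u_t^j \geq \lambda|\vu_t|^2,
\qquad |\vv_t| \leq \Lambda|\vu_t|,\qquad |\Grad\vv|\leq\Lambda|\Grad\vu|.
\end{equation*}
This is the formal reason why we get $\Hk{2}$-type estimates that are not available for general quasilinear systems: the test function $\vv_t$ turns the usual indefinite term $\int\vu_t\cdot(\text{something})_t$ into a definite quadratic form in $\vu_t$.

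First I would formally test $\vu_t = \Laplace\vv$ against $\vv_t\eta^2$, where $\eta$ is a smooth parabolic cutoff with $\eta\equiv 1$ on $\cyl(x,t,r)$, $\spt\eta\subset\cyl(x,t,R)$, $\eta$ vanishing on the parabolic boundary of $\cyl(x,t,R)$, and with $|\Grad\eta|\lesssim(R-r)^{-1}$, $|\eta_t|\lesssim(R-r)^{-2}$. Integrating by parts in space on the right-hand side,
\begin{equation*}
\iint \vu_t\cdot\vv_t\,\eta^2 \dX = -\iint \Grad\vv^i\cdot\Grad\vv_t^i\,\eta^2\dX - 2\iint (\Grad\vv^i\cdot\Grad\eta)\,\vv_t^i\,\eta\dX.
\end{equation*}
The first term on the right equals $-\tfrac12\iint\partial_t|\Grad\vv|^2\,\eta^2$, which, after integrating by parts in $t$ and dropping the nonnegative terminal contribution $\tfrac12\int_{\B_R}|\Grad\vv|^2\eta^2(\cdot,t)$, is bounded by $C(R-r)^{-2}\iint_{\cyl_R}|\Grad\vv|^2$. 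The second term is handled by Cauchy--Schwarz and Young's inequality with the bound $|\vv_t|\leq\Lambda|\vu_t|$, absorbing a fraction $\tfrac{\lambda}{2}\iint|\vu_t|^2\eta^2$ into the left-hand side (which dominates $\lambda\iint|\vu_t|^2\eta^2$ by the convexity identity above). Using $|\Grad\vv|\leq\Lambda|\Grad\vu|$ then gives \eqref{eq:H2_estimate}.

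For \eqref{eq:Hessian_estimate} I would simply view $\Laplace\vv = \vu_t$ as an elliptic equation on each time slice: since $\vu_t\in\Lp{2}_{loc}$ from the first step, standard interior $\Hk{2}$ estimates for Poisson's equation (applied componentwise) give, for radii $r<\rho<R$,
\begin{equation*}
\int_{\B_r}|\Grad^2\vv|^2\dx \leq C\int_{\B_\rho}|\vu_t|^2\dx + \frac{C}{(\rho-r)^2}\int_{\B_\rho}|\Grad\vv|^2\dx,
\end{equation*}
and integrating in $t$ over $I(\cyl_r)$, combining with \eqref{eq:H2_estimate} applied to the pair $(\rho,R)$, and choosing $\rho=(r+R)/2$, yields \eqref{eq:Hessian_estimate}.

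The one genuine technical point is that $\vv_t$ is a priori only a distribution, so literally testing against $\vv_t\eta^2$ is not allowed. I would handle this exactly as in the proof of Theorem~\ref{thm:flow_contraction}: replace $\vu,\vv$ by their Steklov averages $\vu_h,\vv_h$, which satisfy \eqref{eq:milified_eq} in the strong sense and for which $(\vv_h)_t = h^{-1}(\vv(\cdot,\cdot+h)-\vv)$ is a legitimate $\Lp{2}$ test function, carry out the computation above for $\vu_h,\vv_h$ (using the convexity inequality in its integrated/difference form $(\vu_h(t+h)-\vu_h(t))\cdot(\vv_h(t+h)-\vv_h(t))\geq\lambda|\vu_h(t+h)-\vu_h(t)|^2$ if needed), and pass to the limit $h\to 0$. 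This is the only step that needs real care; everything else is the formal computation sketched above.
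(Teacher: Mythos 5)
Your proposal is correct and follows essentially the same route as the paper: test the (Steklov-averaged) equation against $\vv_t$ times a cutoff, use the convexity identity $\vu_t\cdot\vv_t\ge\lambda|\vu_t|^2$ to make the left side coercive, integrate the $\Grad\vv\cdot\Grad\vv_t$ term by parts in time and absorb the cross term by Young's inequality, then deduce \eqref{eq:Hessian_estimate} from slice-wise elliptic $\Hk{2}$ estimates for $\Laplace\vv=\vu_t$. The only differences (a single parabolic cutoff $\eta^2$ instead of the paper's $\xi^2\eta$, and dropping rather than retaining the nonnegative terminal term) are cosmetic.
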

\begin{proof}
As before we will write $\vv$ for $\Grad\vPhi(\vu)$ and $f_h$ for Steklov average of $f$. Denote by $\xi\in C^\infty_0(\eDom)$ a smooth bump function supported in $\Ball{x}{R}\subset\eDom$, which is identically one on $\Ball{x}{r}\subset\eDom$ with $\norm{\Grad\xi}_\infty \leq C/(R-r)$. Also denote by $\eta\in C^\infty_0((t-R^2,t])$ a function that is identically one on $[t-r^2,t]$ and supported in $[t-R^2,t]$ with $|\eta'|\leq C/(R^2-r^2)$. Then multiplying equation
$$
(\vu_h)_t - \Laplace (\vv_h) = 0
$$
by $(\vv_h)_t\xi^2\eta$ we obtain
$$
\int_{t-R^2}^t \int_\eDom (\vu_h)_t(\vv_h)_t\xi^2\eta + \Grad (\vv_h)\Grad(\vv_h)_t\xi^2\eta
+ \Grad (\vv_h)(\vv_h)_t 2\xi\Grad\xi\eta \dX = 0.
$$
Using strict convexity on the first term, integrating the second term in time by parts and using H\"older inequality twice we obtain
\begin{eqnarray*}
\int_{t-R^2}^t \int_\eDom \frac{\lambda}{2}|(\vu_h)_t|^2\xi^2\eta \dX & + & \int_\eDom |\Grad(\vv_h)|^2\xi^2\eta \Big|_t \dx \\
&\leq & C (\norm{\eta'}_\infty + \norm{\Grad\xi}_\infty^2)\int_{t-R^2}^t \int_\eDom |\Grad(\vv_h)|^2\xi^2\dx
\end{eqnarray*}
Thus taking a limit as $h\To 0$ we deduce that $u_t\in L^2_{loc}(\Omega)$ and derive estimate \eqref{eq:H2_estimate} as claimed.
To prove \eqref{eq:Hessian_estimate}, we use $\Hk{2}$ estimates for the Laplacian to get the following for a fixed $t$:
$$
\int_{\Ball{x}{r}\times\set{t}} |\Grad^2 \vv|^2\dx \leq \frac{C}{(R-r)^2} \int_{\Ball{x}{R}\times\set{t}} |\Grad\vv|^2 \dx+ \int_{\Ball{x}{R}\times\set{t}} |u_t|^2\dx.
$$
Integrating in time and using estimate \eqref{eq:H2_estimate} we get
\begin{equation*}
\iint_{\cyl(x,t,r)} |\Grad^2 \vv|^2\dX \leq \frac{C}{(R-r)^2} \iint_{\cyl(x,t,R)} |\Grad\vv|^2 \dX.
\end{equation*}
\qed\end{proof}

In addition to $\Hk{2}$ estimates for weak solutions, bounded weak solutions of equation \eqref{eq:gen_diff_eq} are actually in $\Lp{4}_{loc}$. This is rather unusual since most quasi-linear equations do not have $\Lp{4}$ estimates.
\begin{theorem}[$\Lp{4}$ estimate for bounded solutions]\label{thm:lp_bound_est}
Let $\vu$ be a weak bounded solution of the equation \eqref{eq:gen_diff_eq}. Then  $\Grad \vu$ is locally in $\Lp{4}$ and for $\cyl(x,t,r)\subsetneq \cyl(x,t,R)\subset \pDom$ we have the following estimate:
\begin{equation}\label{eq:L4_estimate}
\iint_{\cyl(x,t,r)} |\Grad\vu|^4\dX \leq
\frac{C\norm{\vu}_\infty^2}{(R-r)^2}\iint_{\cyl(x,t,R)} |\Grad\vu|^2\dX.
\end{equation}
\end{theorem}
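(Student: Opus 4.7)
The plan is to work with the auxiliary field $\vv\lequiv \Grad\vPhi(\vu)$ used in the preceding proofs and combine a Gagliardo--Nirenberg style interpolation for $\vv$ with the Hessian estimate \eqref{eq:Hessian_estimate}. Because $\vv^i_{x_\alpha}=\vPhi_{z_iz_j}(\vu)\vu^j_{x_\alpha}$ with the Hessian of $\vPhi$ having eigenvalues in $[\lambda,\Lambda]$, we have the pointwise comparability $\lambda|\Grad\vu|\leq|\Grad\vv|\leq\Lambda|\Grad\vu|$. Moreover, subtracting the constant $\Grad\vPhi(0)$ from $\vv$ does not change the equation $\vu_t-\Laplace\vv=0$, so we may assume $\|\vv\|_\infty\leq\Lambda\|\vu\|_\infty$. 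Thus \eqref{eq:L4_estimate} reduces to the analogous bound with $\vv$ in place of $\vu$.

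The heart of the proof is a pointwise-in-time interpolation. Fix a cutoff $\xi\in C^\infty_0(\Ball{x}{(R+r)/2})$ with $\xi\equiv 1$ on $\Ball{x}{r}$ and $|\Grad\xi|\leq C/(R-r)$, and rewrite
$$\int_{\Ball{x}{R}}|\Grad\vv|^4\xi^2\dx = \int_{\Ball{x}{R}}|\Grad\vv|^2\,\vv^i_{x_\alpha}\vv^i_{x_\alpha}\xi^2\dx.$$
Integrating by parts in $x_\alpha$ to transfer one derivative off the second $\vv^i_{x_\alpha}$ produces three error terms: one containing $\vv^i\Delta\vv^i$, one containing $\vv^i\vv^i_{x_\alpha}(|\Grad\vv|^2)_{x_\alpha}=2\vv^i\vv^i_{x_\alpha}\vv^j_{x_\beta}\vv^j_{x_\beta x_\alpha}$, and one containing $\xi\Grad\xi$. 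Using $|\vv|\leq\|\vv\|_\infty$ and Cauchy--Schwarz, the first two are bounded by $C\|\vv\|_\infty(\int|\Grad\vv|^4\xi^2)^{1/2}(\int|\Grad^2\vv|^2\xi^2)^{1/2}$, and the third by $C\|\vv\|_\infty(R-r)^{-1}(\int|\Grad\vv|^4\xi^2)^{1/2}(\int|\Grad\vv|^2)^{1/2}$. Dividing by $(\int|\Grad\vv|^4\xi^2)^{1/2}$ and squaring yields the pointwise-in-$t$ interpolation
$$\int_{\Ball{x}{R}}|\Grad\vv|^4\xi^2\dx\leq C\|\vv\|_\infty^2\int_{\Ball{x}{R}}|\Grad^2\vv|^2\xi^2\dx + \frac{C\|\vv\|_\infty^2}{(R-r)^2}\int_{\Ball{x}{R}}|\Grad\vv|^2\dx.$$

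Integrating in $t$ and applying \eqref{eq:Hessian_estimate} on the pair of cylinders $\cyl(x,t,(R+r)/2)\subsetneq\cyl(x,t,R)$ to control $\iint|\Grad^2\vv|^2\xi^2\dX$, we obtain
$$\iint_{\cyl(x,t,r)}|\Grad\vv|^4\dX\leq \frac{C\|\vv\|_\infty^2}{(R-r)^2}\iint_{\cyl(x,t,R)}|\Grad\vv|^2\dX,$$
and converting back with the comparability bounds above gives \eqref{eq:L4_estimate}.

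The principal technical obstacle is making the spatial integration by parts rigorous, since a priori we have only $\vv\in\Hk{1}$ and the argument requires enough smoothness to differentiate $|\Grad\vv|^2$ in space. I would handle this by performing the computation on the Steklov averages $\vv_h$ used in the proof of Theorem~\ref{H2_estimate} --- which inherit local $\Hk{2}$ regularity automatically --- and then passing to the limit $h\to 0$ using $\|\vv_h\|_\infty\leq\|\vv\|_\infty$ together with the $\Hk{2}$ bound \eqref{eq:Hessian_estimate}. A minor secondary issue is that, in high dimension, $|\Grad\vv|^4$ may not be a priori locally integrable; this can be circumvented by replacing the factor $|\Grad\vv|^2$ in the test integrand with its truncation $\min(|\Grad\vv|^2,M)$, deriving the estimate with constants independent of $M$, and sending $M\to\infty$ by monotone convergence.
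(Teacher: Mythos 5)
Your proof is correct and is at heart the same computation as the paper's: the paper tests the equation with $\vv\,\tau_\epsilon(|\Grad\vv|^2)\xi^2$ (a smooth truncation of $|\Grad\vv|^2$), which after one integration by parts produces exactly your three terms --- with $\u^i_t$ standing in for $\Delta\v^i$ --- and then closes the estimate with \eqref{eq:H2_estimate}--\eqref{eq:Hessian_estimate}, Cauchy--Schwarz, and monotone convergence in the truncation parameter, just as you do. Your repackaging as a slice-wise Gagliardo--Nirenberg interpolation fed into \eqref{eq:Hessian_estimate} alone, and your explicit normalization $\Grad\vPhi(0)=0$ so that $\norm{\vv}_\infty\leq\Lambda\norm{\vu}_\infty$, are tidy but not substantive departures.
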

\begin{proof}
Let us again denote $\Grad \Phi(\vu)$ by $\vv$ and let $\tau:\Real \To\Real$ be a smooth increasing function that is linear on $(-\infty, 1]$ and constant on $[2, \infty)$. For some large enough constant $C_1$, $C_1\tau(z)\geq z (\tau'(z))^2$. Define $\tau_\epsilon$ as $\tau_\epsilon(x):= \tau(\epsilon x)/\epsilon$. Notice that $\norm{\tau_\epsilon'}_\infty \leq C_0$ and $C_1\tau_\epsilon(z)\geq z (\tau_\epsilon'(z))^2$ with constants independent of $\epsilon$. Letting $\xi$ be a smooth bump function as in the proof of Theorem above, multiply the equation \eqref{eq:gen_diff_eq} by $\vv\tau_\epsilon(|\Grad\vv|^2)\xi^2$ and integrate by parts to obtain
\begin{multline*}
\iint_{\cyl(x,t,R)} \u^i_t\v^i\tau_\epsilon(|\Grad\vv|^2)\xi^2 + |\Grad\vv|^2\tau_\epsilon(|\Grad\vv|^2)\xi^2 \\
+ 2\v^i\v^i_{x_\alpha}\v^j_{x_\beta}\tau_\epsilon'(|\Grad\vv|^2)\v^j_{x_\alpha x_\beta}\xi^2 + 2\v^i\v^i_{x_\alpha}\tau_\epsilon(|\Grad \vv|^2)\xi\xi_{x_\alpha}\dX=0.
\end{multline*}
Hence we compute
\begin{eqnarray*}
\iint_{\cyl(x,t,R)} |\Grad\vv|^2\tau_\epsilon(|\Grad\vv|^2)\xi^2\dX  \leq  \iint_{\cyl(x,t,R)} C_2\norm{\vv}_\infty^2|\vu_t|^2\xi^2 + \frac{1}{2}\tau^2_\epsilon(|\Grad\vv|^2)\xi^2\\
+ C_3\norm{\Grad\xi}^2_\infty \norm{\vv}_\infty^2 |\Grad\vv|^2
+ C_1\norm{\vv}^2_\infty|\Grad^2\vv|^2 \xi^2 + \frac{1}{4C_1}|\Grad\vv|^4 \tau_\epsilon'(|\Grad\vv|^2)^2\xi^2 \dX.
\end{eqnarray*}
Simplifying and using estimates \eqref{eq:H2_estimate}, \eqref{eq:Hessian_estimate} and $C_1\tau_\epsilon(z)\geq z (\tau_\epsilon'(z))^2$ we deduce
$$
\iint_{\cyl(x,t,r)} |\Grad\vv|^2\tau_\epsilon(|\Grad\vv|^2)\dX \leq
\frac{C\norm{\vu}_\infty^2}{(R-r)^2}\iint_{\cyl(x,t,R)} |\Grad\vu|^2\dX.
$$
Taking a limit as $\epsilon\To 0$ we deduce by monotone convergence theorem that $\Grad\vu$ is locally in $\Lp{4}$ and the estimate as claimed in the statement of the Theorem.
\qed\end{proof}
\begin{remark}
The fact that the gradient is actually locally in $\Lp{4}$ implies that the singular set of a bounded solution has parabolic Hausdorff dimension smaller than $n-2$. As you may recall from Theorem \ref{thm:local_regularity} the singular set is contained in the set
$$
\set{(x,t)\in \pDom \ \Big| \ \liminf_{R\To 0}\frac{1}{R^n}\iint_{Q(x, t ,R)} |\Grad \vu|^2\dX > 0 }.
$$
Now using the H\"older inequality and the fact that $\vu$ locally in $\Lp{4}$ we compute
\begin{eqnarray*}
\frac{1}{R^n}\iint_{\cyl(x,t,R)} |\Grad\vu|^2 \dX & \leq &
\frac{1}{R^n}\left( \iint_{\cyl(x,t,R)} |\Grad\vu|^4\dX\right)^{1/2}|\cyl(x,t,R)|^{1/2}\\
&\leq& C R^{\frac{2-n}{2}}\left(\iint_{\cyl(x,t,R)} |\Grad\vu|^4 \dX\right)^{1/2}\\
&\leq& C \left(R^{2-n}\iint_{\cyl(x,t,R)} |\Grad\vu|^4 \dX\right)^{1/2}.
\end{eqnarray*}
Hence the singular set must be contained in the set
$$
\set{(x,t)\in \pDom \ \Big| \ \liminf_{R\To 0}\frac{1}{R^{n-2}}\iint_{Q(x, t ,R)} |\Grad \vu|^4 \dX > 0 },
$$
which has $n-2$ parabolic Hausdorff measure zero.
\end{remark}

\section{The key lemma}
In this section I discuss the parabolic version of the lemma that seems to be
key in the proof of everywhere regularity of some elliptic systems. Not
surprisingly, it will turn out that the parabolic lemma is crucial to proving
everywhere regularity of solutions to some types of parabolic systems. The
elliptic lemma, to which I refer, is well known and the proof of it can be found
in ~\cite{Giaquinta83} in Chapter 7 as part of Theorem 1.1.

Before we proceed with the discussion of this elliptic lemma let us recall that
coefficients $a_{\alpha\beta}$ are called \textbf{strictly elliptic} if there
exists $\lambda >0$ such that
$$
a_{\alpha\beta}\xi^\alpha\xi^\beta \ge \lambda |\xi|^2\quad\textrm{for all }\xi\in \Rn.
$$
\begin{lemma}\label{lem:elliptic_key_lemma}
Suppose coefficients $a_{\alpha\beta}(x)$ are strictly elliptic, bounded and measurable. Let $u\in \V(\eDom)$, $f\in \Lp{1}(\eDom)$ be nonnegative functions satisfying
\begin{equation}\label{eq:elliptic_key_lemma_key_pdi}
- (a_{\alpha\beta}u_{x_\beta})_{x_\alpha} + f \le 0
\end{equation}
on $\eDom$. For any $x_0\in\eDom$ for which
$\Ball{x_0}{R_0}\subset \eDom$ for some $R_0$, we have the
following:
\begin{equation}\label{eq:elliptic_key_lemma_Morrey_decay}
\liminf_{R\To 0} \frac{1}{R^{n-2}}\int_{\Ball{x_0}{R}} f\dx  = 0.
\end{equation}
\end{lemma}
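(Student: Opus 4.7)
My approach is to test the weak form of the differential inequality against a suitable nonnegative cutoff and reduce the claim to a decay property of the Dirichlet energy of $u$ on small annuli centered at $x_0$. Fix $x_0 \in \eDom$ with $\Ball{x_0}{R_0}\subset\eDom$, and for $0<R<R_0/2$ choose $\eta\in C_c^\infty(\Ball{x_0}{2R})$ with $\eta\equiv 1$ on $\Ball{x_0}{R}$ and $\norm{\Grad\eta}_\infty\leq C/R$. Testing the weak inequality against $\eta^2\geq 0$ and integrating by parts yields
$$\int_{\Ball{x_0}{R}} f\,dx \leq -2\int_{\Ball{x_0}{2R}\setminus\Ball{x_0}{R}} a_{\alpha\beta}u_{x_\beta}\eta\,\eta_{x_\alpha}\,dx,$$
and by Cauchy-Schwarz together with the boundedness of $a_{\alpha\beta}$,
$$\int_{\Ball{x_0}{R}} f\,dx \leq C R^{(n-2)/2}\left(\int_{\Ball{x_0}{2R}\setminus\Ball{x_0}{R}}|\Grad u|^2\,dx\right)^{1/2}.$$

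Setting $I(\rho):=\int_{\Ball{x_0}{\rho}}|\Grad u|^2\,dx$ and squaring, this key estimate becomes
$$\left(R^{2-n}\int_{\Ball{x_0}{R}} f\,dx\right)^2 \leq C\,R^{2-n}\bigl(I(2R)-I(R)\bigr).$$
Hence it suffices to exhibit a sequence $R_k\to 0$ along which $R_k^{2-n}(I(2R_k)-I(R_k))\to 0$. For $n=2$ this is immediate: since $|\Grad u|^2\in\Lp{1}_{loc}$, we have $I(2R)-I(R)\to 0$ as $R\to 0$ by absolute continuity of the integral, and the factor $R^{2-n}=1$ does nothing.

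For $n\geq 3$ the closing step is more delicate. Summing along the dyadic radii $R_k=R_0 2^{-k}$, using that $|x|^{2-n}\geq (2R_k)^{2-n}$ on each disjoint annulus $\Ball{x_0}{2R_k}\setminus\Ball{x_0}{R_k}$, one obtains
$$\sum_{k\geq 0}\frac{I(2R_k)-I(R_k)}{R_k^{n-2}} \leq 2^{n-2}\int_{\Ball{x_0}{2R_0}}\frac{|\Grad u|^2}{|x-x_0|^{n-2}}\,dx.$$
If the weighted Dirichlet integral on the right is finite, then the sum on the left is finite, which together with the squared estimate above forces $\liminf_k R_k^{2-n}\int_{\Ball{x_0}{R_k}} f\,dx =0$ and completes the proof. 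The principal obstacle is therefore the finiteness of $\int|\Grad u|^2|x-x_0|^{2-n}\,dx$, which does not follow from $u\in V(\eDom)$ alone and must be extracted from the subsolution structure of $u$. This is typically achieved by combining the Caccioppoli inequality (derived by testing the PDI against $u\,\eta^2$, which is admissible since $u\geq 0$) with a hole-filling argument, possibly enhanced by the Meyers--Gehring higher integrability of $\Grad u$, to obtain geometric decay $I(R)\leq C R^{n-2+\sigma}$ for some $\sigma>0$, which is exactly what is needed to bound the weighted integral.
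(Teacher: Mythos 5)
Your opening reduction is correct and is genuinely different from the route the paper has in mind: testing against $\eta^2$ and using Cauchy--Schwarz does give
$$
\Bigl(R^{2-n}\int_{\Ball{x_0}{R}} f\dx\Bigr)^2 \le C\,R^{2-n}\bigl(I(2R)-I(R)\bigr),
$$
and for $n=2$ this closes immediately. But for $n\ge 3$ the argument has a genuine gap exactly where you flag it, and the repair you sketch does not work. Hole-filling yields $I(R)\le C R^{\sigma}$ for some small $\sigma>0$ determined by the filling constant, and Meyers--Gehring combined with H\"older yields $I(R)\le C R^{n\eps/(2+\eps)}\norm{\Grad u}_{\Lp{2+\eps}}^2$ with $\eps$ small; neither comes anywhere near the exponent $n-2+\sigma$ you need. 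Worse, the decay $I(R)\le CR^{n-2+\sigma}$ would by itself already give $R^{2-n}\int_{\Ball{x_0}{R}}f\dx\le CR^{\sigma/2}\to 0$ via your first estimate, i.e.\ you would be proving a strictly stronger statement than the lemma as an intermediate step. By Caccioppoli, $I(R)\le CR^{n-2}(\mathrm{osc}_{\Ball{x_0}{2R}}u)^2$ is essentially sharp, so $I(R)\le CR^{n-2+\sigma}$ amounts to H\"older-type continuity of $u$ at $x_0$ --- which is not available for a mere subsolution of an equation with only bounded measurable coefficients, and is certainly not delivered by the energy methods you list. Even the weaker requirement, finiteness of $\int|\Grad u|^2|x-x_0|^{2-n}\dx$, already forces $I(R)=o(R^{n-2})$, a Morrey decay that does not follow from the hypotheses by any soft argument.

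The paper does not prove this elliptic lemma (it cites Giaquinta, Ch.~7), but its proof of the parabolic analogue (Lemma \ref{lem:key_lemma}) shows the intended mechanism, which is pointwise rather than energy-based: set $M_\rho:=\sup_{\Ball{x_0}{\rho}}u$ (finite by local boundedness of subsolutions) and observe that $z:=M_{2R}-u\ge 0$ is a supersolution with $-(a_{\alpha\beta}z_{x_\beta})_{x_\alpha}\ge f$. Testing against the Green's function of the operator on $\Ball{x_0}{2R}$ (or invoking the weak Harnack inequality for nonnegative supersolutions) gives
$$
R^{2-n}\int_{\Ball{x_0}{R}} f\dx \le C\inf_{\Ball{x_0}{R}} z = C\,(M_{2R}-M_{R}),
$$
and summing over dyadic radii the right-hand side telescopes to at most $C(M_{R_0}-\inf_k M_{R_k})<\infty$, forcing $M_{2R_k}-M_{R_k}\to 0$ and hence the claimed $\liminf$. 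Your annulus decomposition could in principle replace the telescoping of the $M_i$ by a telescoping of $I(R_i)$, but only after the weighted Dirichlet integral is shown finite --- and that step is precisely what requires the Harnack/maximum-principle input you have omitted.
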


The proof of this lemma is rather simple and follows easily from the elliptic
Harnack inequality. Because of the peculiar geometry of the parabolic
Harnack inequality, the elliptic proof does not translate directly into the parabolic case. Instead, by adopting proof of elliptic lemma to parabolic equations, one is able to control $f$ on the cylinders whose top centers are slightly shifted back in time. In fact it is not true that $f$ can be controlled on the cylinders whose top centers are not fixed, without additional assumptions on $f$. It turns out, however, that the assumption that one needs to impose to prove the parabolic version of the lemma is satisfied in applications to everywhere regularity of parabolic systems. What we need to assume is that for some $\alpha > 1$ the $\Lp{\alpha}$ norm of $f$ on a cylinder is controlled by the $\Lp{1}$ norm of $f$, perhaps on a larger cylinder.

\begin{lemma}[Key Lemma]\label{lem:key_lemma}
Suppose coefficients $a_{\alpha\beta}(x,t)$ are strictly
elliptic, bounded and measurable.
Let $u\in \V(\pDom)$, $f\in \Lp{1}(\pDom)$ be nonnegative
functions weakly satisfying
\begin{equation}\label{eq:key_lemma_key_pdi}
u_t - (a_{\alpha\beta}u_{x_\beta})_{x_\alpha} + f \le 0
\end{equation}
on $\pDom$.
Further suppose that for some $\alpha >1$ our $f$ satisfies the following:
\begin{equation}\label{eq:higher_Lp}
\left(\dashiint_{Q_R} f^\alpha \dX\right)^{1/\alpha}\leq C \ \dashiint_{Q_{4R}} f \dX,
\end{equation}
for all $Q_R\subset Q_{4R}\subset\pDom$. Then for any $(x_0, t_0)\in\pDom$ for which
$$
\B_{R_0}(x_0)\times (t_0 - R_0^2, t_0+R_0^2)\subset \pDom, \textrm{ for some } R_0,
$$
we have the following:
\begin{equation}\label{eq:key_lemma_Morrey_decay}
\liminf_{R\To 0} \frac{1}{R^{n}}\iint_{Q(x_0,t_0,R)} f \dX  = 0.
\end{equation}
\end{lemma}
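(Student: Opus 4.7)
My plan is to argue by contradiction, adapting the elliptic proof of Lemma \ref{lem:elliptic_key_lemma} to the parabolic setting. Write $Lv := (a_{\alpha\beta}v_{x_\beta})_{x_\alpha}$ for the spatial operator. Suppose there exists $\delta > 0$ with $\iint_{Q(x_0, t_0, R)} f \, dX \geq \delta R^n$ for every $R \leq R_1$. Since $u_t - Lu \leq -f \leq 0$, the function $u$ is a non-negative subsolution of the homogeneous operator $\partial_t - L$, so by Moser's local boundedness $u$ admits an upper-semicontinuous representative that is bounded in a neighborhood of $(x_0, t_0)$; in particular $u(x_0, t_0)$ is well defined and finite. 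I fix a reference cylinder $\mathcal{Q} := B_{R_0}(x_0) \times (t_0 - R_0^2, t_0 + R_0^2) \subset \Omega_T$ and let $G(x, t; y, s)$ be the Dirichlet Green's function of $\partial_t - L$ on $\mathcal{Q}$. Testing $u_t - Lu \leq -f$ against $G(\cdot, \cdot; x_0, t_0)$ and integrating by parts gives
\[
\iint_{\mathcal{Q} \cap \{t < t_0\}} G(x, t; x_0, t_0)\, f(x, t)\, dX \leq C_0,
\]
with $C_0$ depending on $u(x_0, t_0)$ and the parabolic boundary data of $u$ on $\partial_p \mathcal{Q}$.

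Next I invoke Aronson's lower bound $G(x, t; x_0, t_0) \gtrsim (t_0 - t)^{-n/2}$ on the Gaussian region $|x - x_0|^2 \leq c(t_0 - t)$, which yields $G \gtrsim R_k^{-n}$ on the dyadic slabs $\mathrm{slab}_k := B_{R_k}(x_0) \times (t_0 - R_k^2,\, t_0 - R_k^2/4)$ for $R_k = 2^{-k}R_0$. Summing,
\[
\sum_k R_k^{-n} \iint_{\mathrm{slab}_k} f\, dX \leq c^{-1} C_0 < \infty,
\]
so $R_k^{-n} \iint_{\mathrm{slab}_k} f \to 0$ along a subsequence. This gives $\liminf_{R \to 0} R^{-n} \iint_{\mathrm{slab}_R} f\, dX = 0$, but only for the \emph{time-shifted} slabs $\mathrm{slab}_R \subsetneq Q(x_0, t_0, R)$, whose tops lie strictly below $t_0$ --- the direct parabolic analogue of the elliptic Green's-function potential estimate.

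To upgrade from $\mathrm{slab}_R$ to the full backward cylinder $Q(x_0, t_0, R)$, I invoke the reverse-H\"older hypothesis \eqref{eq:higher_Lp}. The missing portion $Q(x_0, t_0, R) \setminus \mathrm{slab}_R$ is the thin top layer $B_R(x_0) \times (t_0 - R^2/4, t_0)$; hypothesis \eqref{eq:higher_Lp} prevents $f$ from concentrating in this layer by controlling its $L^\alpha$-average in terms of the $L^1$-average on a slightly larger cylinder. Combining this with the Green's-function argument applied at smaller dyadic scales aimed into the missing layer (via Green's functions centered at points $(y, s)$ inside it, each yielding its own slab-level bound), I extract a common subsequence $R_{k_j} \to 0$ along which all the relevant shifted-slab averages vanish simultaneously; summing the contributions then gives $R_{k_j}^{-n} \iint_{Q(x_0, t_0, R_{k_j})} f\, dX \to 0$, contradicting the standing hypothesis. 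The hard part is precisely this bridging step: without \eqref{eq:higher_Lp} the Green's-function argument alone yields merely the uniform bound $\iint_{Q_R} f \leq CR^n$ (which, as the preamble to the Lemma notes, is sharp in general), and the quantitative interplay between the reverse-H\"older inequality and the dyadic Green's summation is what produces the decay along a subsequence.
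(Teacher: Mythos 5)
Your overall architecture is the same as the paper's: first obtain decay of $R^{-n}\iint f$ over \emph{time-shifted} sub-cylinders by a potential-theoretic argument, then use the reverse-H\"older hypothesis \eqref{eq:higher_Lp} to handle the remaining piece of $Q(x_0,t_0,R)$. For the first part the paper works directly (no contradiction): it sets $M_i=\sup_{Q_i}u$, applies the parabolic Harnack inequality to $z=M_i-u$, tests against an auxiliary backward solution $w$ of $-w_t-(a_{\alpha\beta}w_{x_\alpha})_{x_\beta}=R_i^{-2}\chi_{Q''_i}$, and telescopes $\sum_i(M_i-M_{i+1})\le\sup_{Q_0}u$. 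Your Green's-function-with-Aronson-lower-bound version is a legitimate variant of this (and note that $\sum_k a_k<\infty$ with $a_k\ge 0$ already gives $a_k\to 0$; no subsequence is needed), modulo the caveat that the interior lower bound $G\gtrsim (t_0-t)^{-n/2}$ only holds on the Gaussian region $|x-x_0|^2\le c(t_0-t)$, so your slabs must also be shrunk in space, not only shifted in time.

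The genuine gap is the bridging step. The set you must still control, $Q(x_0,t_0,R)\setminus \mathrm{slab}_R=\B_R(x_0)\times(t_0-R^2/4,\,t_0)$, is not a thin layer: it has measure exactly $\tfrac14|Q_R|$ (and more is missing once the slab is restricted to the Gaussian region in space). The reverse-H\"older hypothesis combined with H\"older's inequality yields only $\iint_{A\cap Q_R}f\le C\left(|A\cap Q_R|/|Q_R|\right)^{1-1/\alpha}\iint_{Q_{4R}}f$, which for $|A|\sim\tfrac14|Q_R|$ is just $\le C'R^n$ --- no smallness. Your proposed remedy (Green's functions centered at points $(y,s)$ inside the missing layer, plus a ``common subsequence'') does not close: each such center costs a fresh $\sup u$ in the representation formula, so the bounds from infinitely many centers do not sum, and a covering of the $(n{+}2)$-dimensional layer by parabolic slabs of radius $r$ carries $\sum r^n\sim R^{n+2}/r^{2}\to\infty$, so the individual $o(r^n)$ bounds cannot be added up. The correct fix --- and this is exactly what the paper's parameter $\sigma$ accomplishes --- is to make the shifted cylinders $Q'_i=Q(x_0,\,t_0-4\sigma^2R_i^2,\,(1-8\sigma^2)^{1/2}R_i)$ occupy a fraction $(1-8\sigma^2)^{n/2+1}$ of $Q_i$, so that $|Q_i\setminus Q'_i|\le\delta|Q_i|$ with $\delta$ as small as desired; the decay on $Q'_i$ still holds for each fixed $\sigma$ (with $\sigma$-dependent constants, which is harmless since only convergence to zero is needed), and then reverse H\"older applied to the small exceptional set gives $C\delta^{1-1/\alpha}<\epsilon/2$. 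Without tying the geometry of the shifted regions to a tunable smallness parameter in this way, your argument does not reach the conclusion.
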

\begin{proof}
Fix $0 <\sigma \le 1/4$. Set
$$
R_i\lequiv \sigma^i R_0, \quad Q_i\lequiv Q(x_0, t_0, R_i), \quad M_i\lequiv \sup_{Q_i} u
$$
and
$$
Q'_i\lequiv Q(x_0,t_0 - 4\sigma^2 R_i^2, (1-8\sigma^2)^{1/2}R_i),
$$
$$
Q''_i\lequiv Q(x_0,t_0 - 2\sigma^2 R_i^2, (1-4\sigma^2)^{1/2}R_i).
$$
We divide the proof in three steps. In the first step we show that for any $\sigma \in (0, 1/4]$ we have
$$
\lim_{i\To \infty}\frac{1}{R_i^n}\iint_{Q'_i} f \dX = 0.
$$
Once we have done that, we show that we can control $f$ on $Q_i\backslash Q'_i$ with the help of the assumption on $f$. Finally we will put it all together to conclude the lemma. \\
\noindent \emph{Step 1}. 
Fix $i$, and set $z \lequiv M_i - u$. We see that $z \geq 0$ on $Q_i$ and $z$ satisfies
\begin{equation}\label{eq:kl_z}
z_t - (a_{\alpha\beta} z_{x_\beta})_{x_\alpha} \ge f.
\end{equation}
In particular, due to parabolic Harnack inequality (see Theorem 6.24 in ~\cite{Lieberman96})
\begin{equation}\label{eq:kl_harnack_z}
\dashiint_{Q''_i} z\dX \leq C \inf_{Q_{i+1}} z.
\end{equation}
Let $w$ solve backward time parabolic equation
\begin{equation}\label{eq:kl_w}
-w_t - (a_{\alpha\beta}w_{x_\alpha})_{x_\beta} = \frac{1}{R^{2}_i}\chi_{Q''_{i}}
\end{equation}
on $Q_i$ with $w\equiv 0$ on the backward time parabolic
boundary, that is
$$
w \equiv 0 \textrm{ on }(\partial \B_{R_i}(x_0) \times [t_0-R^2_{i}, t_0])\cup
(B_{R_i}(x_0)\times \set{t_0}).
$$
At this point if $zw$ were actually differentiable in time, we
would multiply equation \eqref{eq:kl_w} by $zw$ and
integrate by parts to obtain
$$
\iint_{Q_i} -z\left(\frac{w^2}{2}\right)_t + a_{\alpha\beta}
z_\beta\left(\frac{w^2}{2}\right)_{x_\alpha}  + a_{\alpha\beta}
w_{x_\alpha}w_{x_\beta}z \dX = \frac{1}{R_i^2}\iint_{Q''_{i}} zw\dX,
$$
and since $z$ satisfies equation \eqref{eq:kl_z} we
would conclude that
\begin{equation}\label{eq:key_lemma_sec_to_last}
\frac{1}{R_i^n}\iint_{Q'_i} f\left(\frac{w^2}{2}\right)\dX \leq
\frac{1}{R_i^{n+2}}\iint_{Q''_{i}} zw\dX.
\end{equation}
In general we cannot expect $zw$ to be differentiable in time.
To obtain equation \eqref{eq:key_lemma_sec_to_last} rigorously
one would need to use Steklov average
$$
(zw)^h(x,t):=\frac{1}{h}\int_{t-h}^t z(x,\tau)w(x,\tau)\bd \tau
$$
as a test function in \eqref{eq:kl_w}. However, we will not do this
here, instead I refer the reader to Lemma 6.1 in
~\cite{Lieberman96}, where similar computation has been carried out.

Now, since $w$ solves \eqref{eq:kl_w}, by strong maximal
principle $w \ge \theta >0$ on $Q'_i$ and also $w \le C$
on $Q_i$, with bounds independent of $i$ (one can see this by
scaling for example). Therefore, combining this observation with
inequality \eqref{eq:kl_harnack_z}, we obtain
$$
\frac{1}{R_i^n}\iint_{Q'_i} f \dX \le C\dashiint_{Q''_{i}} z \dX \le C \inf_{Q_{i+1}} z = C (M_i - M_{i+1}).
$$
However, since
$$
\sum_{i=0}^\infty M_i - M_{i+1} \leq \sup_{Q_0} u,
$$
we conclude that
$$
\frac{1}{R_i^n}\iint_{Q'_i} f \dX\To 0 \quad \textrm{as}\quad i \To \infty.
$$
\\
\noindent\emph{ Step 2}.
Let $A$ be some measurable set. We will show that for all $Q_R\subset Q_{8R} \subset Q_{R_0}$ and for all $\epsilon$, there exists $\delta$ such that if $|A\cap Q_R|\leq \delta |Q_R|$, then
$$
\frac{1}{R^n}\iint_{A\cap Q_R}f \dX \leq \epsilon.
$$
First, we can easily deduce by the argument similar to the one in part 1, that
\begin{equation}\label{eq:kl_bound_f}
\frac{1}{(4R)^n}\iint_{Q_{4R}}f \dX \le C,
\end{equation}
where constant is independent of $R$.

We now use \eqref{eq:higher_Lp} to conclude that
\begin{eqnarray*}
\iint_{A\cap Q_R} f\dX & \le & \left(\frac{1}{|Q_R|}\iint_{A\cap Q_R} f^\alpha \dX \right)^{1/\alpha}|Q_R|^{1/\alpha}|A\cap Q_R|^{1-1/\alpha} \\
& = & C|Q_R|^{1/\alpha}|A\cap Q_R|^{1- 1/\alpha} \dashiint_{Q_{4R}} f \dX \\
& = & C\left(\frac{|A\cap Q_R|}{|Q_R|}\right)^{1-1/\alpha} \iint_{Q_{4R}}f \dX \leq C_1R^n\delta^{1-1/\alpha}.
\end{eqnarray*}
Above, the last inequality follows by \eqref{eq:kl_bound_f}.

\noindent \emph{Step 3}.\\
Finally we put everything together. Fix $\epsilon >0$. First notice that by choosing $\sigma$ small enough we can make
$$
|Q_i\backslash Q'_i| \leq \delta |Q_i|,
$$
where $C_1\delta^{1-1/\alpha} < \epsilon /2$. Then by first step we can find $i >2$ such that
$$
\frac{1}{R_i^n}\iint_{Q'_i} f \dX \leq \epsilon/2.
$$
Finally, the above together with conclusion of second step gives us
$$
\frac{1}{R_i^n}\iint_{Q_i} f \dX = \frac{1}{R_i^n}\iint_{Q'_i} f \dX + \frac{1}{R_i^n}\iint_{Q_i\backslash Q'_i} f \dX < \epsilon.
$$
\qed\end{proof}

Now we are in position to apply our key Lemma to deduce crucial importance of entropy in questions of everywhere regularity for parabolic systems.
\begin{theorem}[Entropy condition]\label{thm:entropy_condition}
Let $\vu$ be weak solution of \eqref{eq:nonlinear_system}. Suppose there exists $\phi\in \V(\pDom)$ that together with $\vu$ weakly satisfy the following inequality:
$$
\phi_t - (a_{\alpha\beta}\phi_{x_\beta})_{x_\alpha} + \lambda|\Grad \vu|^2 \leq 0,
$$
where $a_{\alpha\beta}$ are bounded and strictly elliptic. Then $\vu$ is everywhere H\"older continuous on the interior of $\pDom$.
\end{theorem}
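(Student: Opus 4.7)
The plan is to apply the Key Lemma directly with $u = \phi$ and $f = \lambda |\Grad \vu|^2$, and then feed the resulting Morrey-type decay into Theorem \ref{thm:local_regularity}. The only nontrivial verification needed is the higher integrability hypothesis \eqref{eq:higher_Lp} on $f$, which I expect to obtain from Lemma \ref{lem:lp_estimate}.

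First, I would check \eqref{eq:higher_Lp} for $f = \lambda |\Grad \vu|^2$. Since $\vu$ is a weak solution of the strongly elliptic quasi-linear system \eqref{eq:nonlinear_system}, Lemma \ref{lem:lp_estimate} gives an exponent $p > 2$ with
$$
\left(\dashiint_{Q_R} |\Grad \vu|^p \dX\right)^{1/p} \le C \left(\dashiint_{Q_{4R}} |\Grad \vu|^2 \dX\right)^{1/2}.
$$
Set $\alpha := p/2 > 1$. Raising both sides to the $p$-th power and dividing by $\lambda$-powers appropriately, this is equivalent to
$$
\left(\dashiint_{Q_R} f^{\alpha} \dX\right)^{1/\alpha} \le C \dashiint_{Q_{4R}} f \dX,
$$
which is exactly the hypothesis \eqref{eq:higher_Lp} with exponent $\alpha$.

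Next I would apply Lemma \ref{lem:key_lemma} to $\phi$ (possibly after adding a constant on a fixed large cylinder so that $\phi \ge 0$ there, using that the entropy differential inequality is translation-invariant; local boundedness of $\phi$ on interior subcylinders, which the Harnack-based argument of the Key Lemma needs, follows from $\phi \in V(\pDom)$ in the applications of interest). The Key Lemma then gives, for every interior point $(x_0,t_0) \in \pDom$,
$$
\liminf_{R\to 0} \frac{1}{R^n}\iint_{Q(x_0,t_0,R)} \lambda |\Grad \vu|^2 \dX = 0,
$$
and since $\lambda > 0$ is a constant, condition \eqref{eq:local_regularity_condition} of Theorem \ref{thm:local_regularity} holds at every $(x_0,t_0) \in \pDom$. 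Theorem \ref{thm:local_regularity} then yields Hölder continuity in a neighborhood of each such point, giving everywhere Hölder continuity in the interior of $\pDom$.

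The main obstacle, as already anticipated by the author's construction of the Key Lemma, is not any single step of the above but rather the verification that $|\Grad \vu|^2$ enjoys the reverse-Hölder-type bound \eqref{eq:higher_Lp}; without it, one would only control $f$ on the shifted cylinders $Q'_i$ in Step 1 of the Key Lemma's proof, not on the centered cylinders $Q(x_0,t_0,R)$ needed by Theorem \ref{thm:local_regularity}. The Giaquinta–Struwe higher integrability result Lemma \ref{lem:lp_estimate} precisely bridges this gap, which is why the proof reduces to a short matching of hypotheses.
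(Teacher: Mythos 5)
Your proposal is correct and follows essentially the same route as the paper: verify the reverse-H\"older hypothesis \eqref{eq:higher_Lp} for $f=\lambda|\Grad\vu|^2$ via Lemma \ref{lem:lp_estimate} (with $\alpha=p/2$), apply the Key Lemma \ref{lem:key_lemma} with $u=\phi$, and feed the resulting decay \eqref{eq:key_lemma_Morrey_decay} into Theorem \ref{thm:local_regularity}. The paper's own proof is in fact terser, simply asserting that \eqref{eq:Lp_estimate} implies the hypotheses of the Key Lemma, so your explicit matching of exponents and your remark about normalizing $\phi$ to be nonnegative only add detail the paper leaves implicit.
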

\begin{proof}
Since $\vu$ satisfies condition \eqref{eq:Lp_estimate}, we see immediately that conditions of the key Lemma \ref{lem:key_lemma} are satisfied. Therefore, we conclude that
$$
\liminf_{R\To 0}\frac{1}{R^n}\iint_{Q(X_0,R)} |\Grad \vu|^2 \dX= 0.
$$
However, this is precisely the condition \eqref{eq:local_regularity_condition} of Theorem \ref{thm:local_regularity}. Hence we conclude that $\vu$ is everywhere H\"older continuous on the interior of $\pDom$.
\qed\end{proof}

\section{Everywhere regularity of certain diffusion systems}
In this section we come back to the discussion of weak solutions of the diffusion system \eqref{eq:gen_diff_eq}, imposing additional requirement that $\vPhi(z)$ is a function of only the norm of $z$. As we will see this allows us to conclude much more about solutions of this equation. In particular solutions are bounded if they are bounded initially and at the boundary. More importantly, solutions are actually everywhere H\"older continuous and thus smooth if $\vPhi$ is.

First I will show that in the case when $\vPhi$ is a functions of only the norm, weak solutions of equation \eqref{eq:gen_diff_eq} that are bounded initially and bounded at the boundary will remain bounded for all time.
\begin{theorem}[Boundedness]\label{thm:boundedness}
Let $\vu$ be a weak solution of the generalized diffusion equation \eqref{eq:gen_diff_eq}. Also suppose that $\norm{\vu(\cdot,0)}_{\Lp{\infty}(\eDom)}< \infty$ and $\norm{\vu(\cdot,t)}_{\Lp{\infty}(\partial\eDom)} < \infty$ for all $t\in [0,T]$. Then $\norm{\vu}_{\Lp{\infty}(\eDom)} < \infty$ and we have
\begin{equation}\label{eq:sup_bound}
\norm{\vu(\cdot, t)}_{\Lp{\infty}(\eDom)}\leq \max\set{\norm{\vu(\cdot,0)}_{\Lp{\infty}(\eDom)}, \sup_{s\in [0,t]}\norm{\vu(\cdot,s)}_{\Lp{\infty}(\partial\eDom)} }.
\end{equation}
\end{theorem}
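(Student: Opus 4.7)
The plan is to exploit the hypothesis $\vPhi(z)=F(|z|)$ to show that the scalar quantity $w:=\tfrac{1}{2}|\vu|^2$ is a weak subsolution of a uniformly parabolic scalar equation of porous-medium type, and then to conclude by a standard Stampacchia-type maximum principle. The condition \eqref{eq:strict_convexity}, applied on $\xi = z$ and on $\xi\perp z$ respectively, forces $F''(r)\ge\lambda>0$ and $F'(r)\ge\lambda r$ for $r\ge 0$, and a short computation yields $\Grad\vPhi(\vu)=(F'(|\vu|)/|\vu|)\vu$; in particular $F$ is strictly convex as a scalar function.

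The key calculation is the pointwise identity, obtained by expanding $\Laplace(\u^i\vPhi_{z_i}(\vu))$ and $\Laplace\vPhi(\vu)$ via the chain rule twice,
$$
\vu\cdot\Laplace(\Grad\vPhi(\vu))=\Laplace H(\vu)-\vPhi_{z_iz_j}(\vu)\u^i_{x_\alpha}\u^j_{x_\alpha},
$$
where $H(z):=z\cdot\Grad\vPhi(z)-\vPhi(z)$. Inserting the equation $\vu_t=\Laplace(\Grad\vPhi(\vu))$ and using strict convexity gives
$$
w_t-\Laplace H(\vu)\le -\lambda|\Grad\vu|^2\le 0.
$$
Under the rotational hypothesis, $H(z)=|z|F'(|z|)-F(|z|)$ depends only on $|z|$, hence only on $w$. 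Setting $\psi(w):=\sqrt{2w}\,F'(\sqrt{2w})-F(\sqrt{2w})$ and differentiating one checks $\psi'(w)=F''(\sqrt{2w})\ge\lambda$. Thus $w$ is a weak subsolution of the uniformly parabolic scalar equation $w_t=\Laplace\psi(w)$.

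With this scalar inequality in hand, the final step is the standard energy argument. I will set
$$
K:=\tfrac{1}{2}\max\set{\norm{\vu(\cdot,0)}_{\Lp{\infty}(\eDom)}^2,\,\sup_{s\in[0,t]}\norm{\vu(\cdot,s)}_{\Lp{\infty}(\partial\eDom)}^2},
$$
so that $w\le K$ on the parabolic boundary of $\pDom$ and $(w-K)_+$ vanishes there and at the initial time. Testing the weak inequality against $(w-K)_+$ produces
$$
\tfrac{1}{2}\tfrac{d}{ds}\int_\eDom(w-K)_+^2\dx+\int_\eDom\psi'(w)|\Grad(w-K)_+|^2\dx\le 0,
$$
and since $\psi'\ge\lambda>0$ while $(w-K)_+$ vanishes at $s=0$, I conclude $(w-K)_+\equiv 0$, which is exactly \eqref{eq:sup_bound}.

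The hard part will be the rigorous justification of the scalar subsolution property without an a priori $\Lp{\infty}$ bound on $\vu$. I plan to appeal to the $\Hk{2}$ regularity of Theorem \ref{H2_estimate}, which guarantees $\vu_t\in\Lp{2}_{loc}(\pDom)$ and $\Grad^2(\Grad\vPhi(\vu))\in\Lp{2}_{loc}(\pDom)$; this makes the above algebraic manipulation rigorous as an a.e.\ identity in the interior. A further minor point is that $(w-K)_+$ may fail to be square-integrable a priori, which I plan to handle by first truncating the test function at level $M$, running the energy computation, and sending $M\to\infty$ by monotone convergence; one also needs to verify, using the hypothesis $\norm{\vu(\cdot,s)}_{\Lp{\infty}(\partial\eDom)}<\infty$, that $(w-K)_+$ has the correct zero trace on $\partial\eDom$.
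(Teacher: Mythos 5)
Your proof is correct, but it takes a genuinely different route from the paper's. The paper fixes $\epsilon>0$, builds a smooth convex radial cutoff $\Gamma(z)=\gamma(|z|)$ with $\gamma\equiv 0$ on $[0,B+\epsilon]$ and linear at infinity, tests the system with $\Grad_z\Gamma(\vu)$, and obtains the good sign of the resulting quadratic term from the observation that the Hessians of two radial functions commute (so $\Gamma_{z_iz_k}\vPhi_{z_kz_j}\u^i_{x_\alpha}\u^j_{x_\alpha}\ge 0$); positivity of $\Gamma(\vu)$ then forces $\Gamma(\vu)\equiv 0$. You instead pass through the identity
$$
\vu\cdot\Laplace\bigl(\Grad\vPhi(\vu)\bigr)=\Laplace H(\vu)-\vPhi_{z_iz_j}(\vu)\u^i_{x_\alpha}\u^j_{x_\alpha},\qquad H(z)=z\cdot\Grad\vPhi(z)-\vPhi(z),
$$
which is valid for arbitrary convex $\vPhi$ (I checked the chain-rule computation; it is right, as is $\psi'(w)=F''(\sqrt{2w})\ge\lambda$), use the radial hypothesis only to write $H(\vu)=\psi(|\vu|^2/2)$, and then run Stampacchia on the scalar inequality. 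Your route buys an explicit, quantitative scalar entropy ($|\vu|^2/2$ with flux $\psi$) in the spirit of Lemma \ref{lem:gen_diff_scalar}, and it isolates exactly where radial symmetry enters; the paper's route avoids the identity by a softer convexity/commutation argument and, by taking $\gamma$ linear at infinity, sidesteps the integrability issue that forces you to truncate at level $M$. Note that both arguments are at bottom the same mechanism: your test function is $\Grad_z G(\vu)$ with $G(z)=\tfrac12\bigl(|z|^2/2-K\bigr)_+^2$, which is itself a convex radial function vanishing below the threshold.

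One technical point to tighten: you establish the subsolution property of $w$ only in the interior (via Theorem \ref{H2_estimate}), yet the energy inequality is integrated over all of $\eDom$ and uses the lateral boundary condition. The cleaner fix is to skip the intermediate a.e.\ identity entirely and plug $\vw=\vu\,g'(w)$ (suitably truncated and Steklov-averaged) directly into the weak formulation; since $H_{z_j}=z_i\vPhi_{z_iz_j}$, the term $(\vPhi_{z_i})_{x_\alpha}(\u^i g'(w))_{x_\alpha}$ reproduces $\Grad H(\vu)\cdot\Grad g'(w)$ plus the nonnegative quadratic form without ever invoking second derivatives. With that adjustment your argument is at the same level of rigor as the paper's own (which is itself formal about time differentiation and boundary terms).
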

\begin{proof}
Set $B$ to
$$
B = \max\set{\norm{\vu(\cdot,0)}_{\Lp{\infty}(\eDom)}, \sup_{s\in [0,t]}\norm{\vu(\cdot,s)}_{\Lp{\infty}(\partial\eDom)}}.
$$
Fix $\epsilon > 0$ that is less than one. Let $\gamma:\Real_+\To\Real$ be a smooth convex function which is identically zero on $[0,B+\epsilon]$, positive and increasing otherwise, and linear on $[B+1, \infty)$. Also set $\Gamma(z)$ to $\gamma(|z|)$. Then we compute
\begin{eqnarray*}
\frac{d}{dt}\int_\eDom \Gamma(\vu(x,t))\dx & = & \int_\eDom \Gamma_{z_i}\u^i_t\dx \\
& = & \int_\eDom \left(\Gamma_{z_i}\vPhi_{z_i z_j} \u^j_{x_\alpha}\right)_{x_\alpha}
- \Gamma_{z_i z_k}\vPhi_{z_k z_j} \u^i_{x_\alpha}\u^j_{x_\alpha} \dx \\
& \leq & \int_{\partial\eDom} \Gamma_{z_i}\vPhi_{z_i z_j} \u^j_{x_\alpha} \nu_\alpha \dS \\
& = & 0.
\end{eqnarray*}
The inequality is true because Hessians of two functions of only the norm commute and both $\Gamma$ and $\vPhi$ are convex. The last equality is true because $\gamma$ is identically zero on $[0,B+\epsilon]$ and $|\vu|$ is less than or equal to $B$ on the boundary. Since $\Gamma(\vu)$ is positive and initially zero we conclude that $\Gamma$ is zero up to time $t$ and thus $\norm{\vu(\cdot, t)}_\infty \leq B +\epsilon$. Since the inequality is true for all $\epsilon >0$ the Theorem follows.
\qed\end{proof}

The next Lemma will show that if we suppose that $\vPhi(z)$ is a function only of the norm of $z$, then there exists an entropy that satisfies conditions of Theorem \ref{thm:entropy_condition}.
\begin{lemma}\label{lem:gen_diff_scalar}
Let $\vu$ be a weak bounded solution of \eqref{eq:gen_diff_eq} and suppose
$\vPhi$ is of the form
$$
\vPhi(\vu) = \vphi(|\vu|).
$$
Then there is a continuously differentiable, strictly increasing function
$\gamma:\Real\To\Real$ such that $\vphi=\vphi(|\vu|)$ weakly satisfies the following
inequality:
\begin{equation}
\vphi_t - \Laplace(\gamma(\vphi)) + \lambda^2 |\Grad\vu|^2 \leq 0.
\end{equation}
\end{lemma}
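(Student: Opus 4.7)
The plan is to find an explicit formula for $\gamma$ by testing the equation against $\Grad\vPhi(\vu)$ itself and exploiting the radial structure, then to invoke the $\Hk{2}$ and $\Lp{4}$ regularity already established in Theorems \ref{H2_estimate} and \ref{thm:lp_bound_est} to justify the computation weakly.

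First I would set $\rho\lequiv|\vu|$ and $\vv\lequiv\Grad\vPhi(\vu)$, so that $\vu_t=\Laplace\vv$. Because $\vPhi$ is radial we have $v^i=\vphi'(\rho)u^i/\rho$, whence $\vphi'(\rho)u^i/\rho\cdot u^i_t=v^i u^i_t=\partial_t\vphi(\rho)$. Using the PDE this gives, at least formally,
\begin{equation*}
\vphi(\rho)_t \;=\; v^i\Laplace v^i \;=\; \tfrac{1}{2}\Laplace|\vv|^2 \;-\; |\Grad\vv|^2.
\end{equation*}
Now $|\vv|^2=\vphi'(\rho)^2$ is a function of $\rho$ alone and hence, since $\vphi''\geq\lambda>0$ forces $\vphi$ to be strictly increasing, a function of $\vphi$ alone. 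I would therefore define
\begin{equation*}
\gamma(s)\lequiv \tfrac{1}{2}\,\vphi'\!\left(\vphi^{-1}(s)\right)^2,
\end{equation*}
so that $\gamma(\vphi(\rho))=|\vv|^2/2$. A direct differentiation yields $\gamma'(s)=\vphi''(\vphi^{-1}(s))\geq\lambda>0$, so $\gamma$ is continuously differentiable and strictly increasing as required.

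With $\gamma$ in hand the identity above becomes $\vphi_t-\Laplace(\gamma(\vphi))=-|\Grad\vv|^2$, and it remains to dominate $|\Grad\vu|^2$ by $|\Grad\vv|^2$. Since $v^i_{x_\alpha}=\vPhi_{z_iz_j}(\vu)u^j_{x_\alpha}$, for each fixed $\alpha$ the symmetric positive-definite matrix $\vPhi_{z_iz_j}$ with smallest eigenvalue at least $\lambda$ gives $|v^i_{x_\alpha}|^2\geq\lambda^2|u^j_{x_\alpha}|^2$; summing over $\alpha$ yields $|\Grad\vv|^2\geq\lambda^2|\Grad\vu|^2$, which delivers the claimed inequality.

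The main obstacle is that $\vu$ is only a weak solution, so the pointwise manipulation of $v^i\Laplace v^i$ and of $\Laplace|\vv|^2$ must be made rigorous. This is where the previous section pays off: Theorem \ref{H2_estimate} gives $\vu_t\in\Lp{2}_{loc}$ and $\Grad^2\vv\in\Lp{2}_{loc}$, and Theorem \ref{thm:lp_bound_est} gives $\Grad\vu\in\Lp{4}_{loc}$. Thus all quantities appearing in the computation are locally integrable and the chain rule for Sobolev compositions applies to $\vphi(|\vu|)$ and to $\gamma(\vphi(|\vu|))$ (using that $\vphi$ is $C^2$ and $\gamma$ is $C^1$, on the bounded range of $|\vu|$). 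The cleanest way to formalize this is to test the equation $\vu_t=\Laplace\vv$ against $\vv\,\eta$ with an arbitrary nonnegative $\eta\in C^\infty_0(\pDom)$, integrate by parts in space (licit by the regularity above), and interpret the Steklov-averaged time derivative as in the proof of Theorem \ref{H2_estimate}; this converts the formal identity into the distributional inequality stated in the lemma.
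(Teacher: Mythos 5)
Your proposal is correct and follows essentially the same route as the paper: the same identity $\vphi_t = \tfrac12\Laplace|\Grad\vPhi(\vu)|^2 - |\Grad(\Grad\vPhi(\vu))|^2$, the same choice of $\gamma$ (your formula $\gamma(s)=\tfrac12\vphi'(\vphi^{-1}(s))^2$ is exactly the paper's $\gamma(z)=\int_0^z\vphi''(\vpsi(t))\,\textrm{d}t$ up to the normalization $\vPhi(0)=0$), and the same eigenvalue bound $|\Grad\vv|^2\ge\lambda^2|\Grad\vu|^2$. Your added remarks on justifying the computation weakly via Theorems \ref{H2_estimate} and \ref{thm:lp_bound_est} and Steklov averages are a reasonable elaboration of what the paper leaves implicit.
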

\begin{proof}
First of all, without loss of generality we can suppose that $\vPhi(0)=0$.
Notice that $\vphi$ satisfies the following equation weakly:
$$
\vphi_t - (\vPhi_{z_i}\vPhi_{z_i z_j}\u^j_{x_\alpha})_{x_\alpha} +
|\Grad_x(\Grad\vPhi(\vu))|^2=0.
$$
Looking at the quantity inside the divergence term we see that it is equal to
$$
\vPhi_{z_i}\vPhi_{z_i z_j}\u^j_{x_\alpha} =
\left(\frac{1}{2}|\Grad\vPhi(\vu)|^2\right)_{x_\alpha}.
$$
Since $\vPhi(z) = \vphi(|z|)$, we observe that
$$
\frac{1}{2}|\Grad \vPhi(\vu)|^2 = \frac{1}{2} \vphi'(|\vu|)^2.
$$
Let $\vpsi$ be the continuous inverse of $\vphi$. Set
$$
\gamma(z) = \int_0^z \vphi''(\vpsi(t)) \dt.
$$
Then multiplying $\gamma'(\phi(z))$ by $\phi'(z)$ and integrating, we see that $\gamma$ and $\vphi$ satisfy
$$
\gamma(\vphi(z)) = \frac{1}{2}\vphi'(z)^2.
$$
This $\gamma$ is continuously differentiable and strictly
increasing, since $\vphi$ is strictly convex. Therefore we
conclude that $\vphi$ satisfies
$$
\vphi_t - \Laplace(\gamma(\vphi))+|\Grad_x(\Grad\vPhi(\vu))|^2=0,
$$
and due to strict convexity the last term on the left hand side is greater or
equal to $\lambda^2 |\Grad \vu|^2$.
\qed\end{proof}

Now we are in position to use the above Lemma \ref{lem:gen_diff_scalar}
together with Theorem \ref{thm:entropy_condition} to deduce
\begin{theorem}
Weak solutions of equation (\ref{eq:gen_diff_eq}) are H\"older
continuous in $\pDom$.
\end{theorem}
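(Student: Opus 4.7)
The plan is to combine Lemma \ref{lem:gen_diff_scalar} with Theorem \ref{thm:entropy_condition}, invoking Theorem \ref{thm:boundedness} to secure the boundedness needed along the way. First I would note that \eqref{eq:gen_diff_eq} is a quasi-linear parabolic system of the form \eqref{eq:nonlinear_system} with coefficients $A^{\alpha\beta}_{ij}(\vu) = \vPhi_{z_i z_j}(\vu)\delta_{\alpha\beta}$; strong ellipticity \eqref{eq:strong_ellipticity} is immediate from \eqref{eq:strict_convexity}. To invoke Theorem \ref{thm:entropy_condition} we also need boundedness and continuity of the coefficients, which reduces to boundedness of $\vu$; since we are working on the interior of $\pDom$, we may localize on compactly contained subcylinders where $\vu \in \Lp{\infty}$ by the standing hypothesis (or, if starting from boundedness of the data, by Theorem \ref{thm:boundedness}).

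Next I would produce the entropy. Lemma \ref{lem:gen_diff_scalar} yields a scalar $\phi := \vphi(|\vu|)$ and a continuously differentiable, strictly increasing $\gamma$ such that
\begin{equation*}
\phi_t - \Laplace(\gamma(\phi)) + \lambda^2|\Grad \vu|^2 \leq 0
\end{equation*}
weakly. To match the format required by Theorem \ref{thm:entropy_condition}, I would rewrite $\Laplace(\gamma(\phi)) = \Div(\gamma'(\phi)\Grad\phi)$ and set
\begin{equation*}
a_{\alpha\beta}(x,t) := \gamma'(\phi(x,t))\,\delta_{\alpha\beta},
\end{equation*}
so that the inequality becomes
\begin{equation*}
\phi_t - (a_{\alpha\beta}\phi_{x_\beta})_{x_\alpha} + \lambda^2|\Grad \vu|^2 \leq 0.
\end{equation*}

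It remains to check that these $a_{\alpha\beta}$ are bounded, measurable, and strictly elliptic. Since $\vu$ is locally bounded, $\phi = \vphi(|\vu|)$ takes values in a compact interval $J$. On $J$ the function $\gamma'(s) = \vphi''(\vpsi(s))$ is continuous; strict convexity of $\vphi$ (inherited from \eqref{eq:strict_convexity}) forces $\vphi'' \ge \lambda > 0$, so $\gamma'$ is bounded above and uniformly bounded below by a positive constant on $J$. Hence $a_{\alpha\beta}$ satisfies all hypotheses of Theorem \ref{thm:entropy_condition}, with $\phi$ playing the role of the entropy and $\lambda$ replaced by $\lambda^2$. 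Applying Theorem \ref{thm:entropy_condition} to $\vu$ with entropy $\phi$ then gives interior Hölder continuity.

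The main obstacle is the uniform positive lower bound on $\gamma'(\phi)$: if $\vu$ were unbounded, $\phi$ could escape to infinity and $\gamma'$ could either blow up or degenerate, breaking the strict ellipticity requirement of the key lemma. This is exactly why boundedness was established in Theorem \ref{thm:boundedness}, and why the "$\vPhi$ depends only on $|\vu|$" structure (which both furnishes the entropy via Lemma \ref{lem:gen_diff_scalar} and gives the maximum principle of Theorem \ref{thm:boundedness}) is what drives the argument. Once boundedness is in hand, the rest is bookkeeping that puts the entropy inequality into the exact form demanded by Theorem \ref{thm:entropy_condition}.
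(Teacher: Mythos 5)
Your proposal is correct and follows essentially the same route as the paper: invoke Lemma \ref{lem:gen_diff_scalar} to obtain the entropy $\phi=\vphi(|\vu|)$, rewrite $\Laplace(\gamma(\phi))$ in divergence form with coefficient $a=\gamma'(\phi)$, and apply Theorem \ref{thm:entropy_condition}. Your extra verification that $\gamma'(\phi)$ is bounded and uniformly elliptic (via boundedness of $\vu$ and $\vphi''\ge\lambda$) is a detail the paper leaves implicit, and is a worthwhile addition.
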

\begin{proof}
Lemma \ref{lem:gen_diff_scalar} tells us, that for $\vu$ a weak bounded solution of \eqref{eq:gen_diff_eq}, there exists $\phi$ satisfying
$$
\vphi_t - (a\vphi_{x_\alpha})_{x_\alpha} + \lambda^2 |\Grad\vu|^2 \leq 0,
$$
where $a(x,t) \lequiv \gamma'(\vphi(x,t))$. However, this is precisely the condition of Theorem \ref{thm:entropy_condition}. Therefore, we conclude the proof.
\qed\end{proof}

\section{Strongly coupled parabolic systems}
One of the earliest nontrivial examples of quasi-linear
parabolic systems whose solutions have interior everywhere
regularity was due to Wiegner ~\cite{Wiegner92}. These are the
so-called strongly coupled parabolic systems of the following
form:
\begin{equation}\label{eq:strongly_coupled}
\u^i_t - (a_{\alpha\beta}\u^i_{x_\beta} +
c^i_{\alpha\beta}H_{x_\beta})_{x_\alpha} = 0,
\end{equation}
where
\begin{enumerate}
\item $H = H(\vu)$ is a function of $\vu$;
\item $A^{ij}_{\alpha\beta}:= a_{\alpha\beta}\delta^{ij} + c^i_{\alpha\beta}H_{z_j}$ and $a_{\alpha\beta}$
are strictly elliptic in the sense that
$$
 \lambda |\xi|^2
\le A^{ij}_{\alpha\beta}\xi^i_\alpha\xi^j_\beta,\textrm{ and } \lambda |\zeta|^2 \le a_{\alpha\beta} \zeta^i \zeta^j, \quad \textrm{for all }\xi\in \Real^{nN}, \zeta\in \Rn;
$$
\item $H(z)$ is twice continuously differentiable and
$\lambda |\zeta|^2 \le  H_{z_i z_j} \zeta^i \zeta^j;$
\item $a_{\alpha \beta}$ and $c^i_{\alpha\beta}$ are bounded.
\end{enumerate}

After Wiegner, Dung ~\cite{Dung99} also worked on these types of system. However, neither Wiegner's nor Dung's proofs of everywhere regularity for solutions of strongly coupled parabolic systems reduce to something analogous to the key Lemma \ref{lem:key_lemma}. It is instructive to prove everywhere regularity of weak solutions to \eqref{eq:strongly_coupled} using the key Lemma to illustrate an underlying similarity between strongly coupled parabolic systems and generalized diffusion equations \eqref{eq:gen_diff_eq}. It appears that for both systems discussed in this paper the existence of an \emph{entropy} is crucial for everywhere regularity of their solutions.

\begin{remark}
When $\vPhi$ only depends on the norm of the gradient, diffusion system \eqref{eq:gen_diff_eq} actually has the form of a strongly coupled system, except with possibly \emph{non-convex} $H$. Indeed, if $\vPhi(z)= \vphi(|z|)$, then
$$
\vPhi_{z_i z_j}(z) = \frac{\vphi'(|z|)}{|z|}\delta_{ij} +
\frac{z_i}{|z|}\left(\vphi''(|z|) - \frac{\vphi'(|z|)}{|z|}\right)\frac{z_j}{|z|},
$$
therefore, with
$$
a_{\alpha\beta} = \frac{\vphi'(|\vu|)}{|\vu|}\delta_{\alpha\beta}, \quad
c^i_{\alpha\beta} =\frac{\u^i}{|\vu|}\delta_{\alpha\beta},
$$
and
$$
H(z) = \vphi'(|z|) - \int_0^{|z|} \frac{\vphi'(s)}{s} \ds,
$$
system \eqref{eq:gen_diff_eq} has the form \eqref{eq:strongly_coupled}.
\end{remark}

We can prove an interior everywhere regularity result for strongly coupled
parabolic systems rather easily using Theorem \ref{thm:entropy_condition}.
As in the previous section I will show existence of an entropy.
\begin{lemma}
Let $\vu \in \V(\pDom; \RN)$ be a weak bounded solution of
(\ref{eq:strongly_coupled}), then for some large enough $s$ there is a positive
constant $c$ such that $v := e^{sH}$ is a subsolution of the following
equation:
\begin{equation}\label{eq:strongly_coupled_scalar}
v_t - (A_{\alpha\beta} v_{x_\beta})_{x_\alpha} + c|\Grad\vu|^2 \leq 0.
\end{equation}
\end{lemma}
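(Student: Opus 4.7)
The plan is to compute $v_t$ directly from $v = e^{sH(\vu)}$ and the system \eqref{eq:strongly_coupled}, then redistribute derivatives so that the parabolic operator in the target inequality emerges on the left and explicit quadratic quantities in $\Grad\vu$ appear on the right. Formally, $v_t = sv H_{z_i}\vu^i_t = sv H_{z_i}(A^{ij}_{\alpha\beta}\vu^j_{x_\beta})_{x_\alpha}$, and a product-rule manipulation rewrites this as $\partial_{x_\alpha}(sv H_{z_i}A^{ij}_{\alpha\beta}\vu^j_{x_\beta}) - \partial_{x_\alpha}(svH_{z_i})\,A^{ij}_{\alpha\beta}\vu^j_{x_\beta}$. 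Using $v_{x_\beta} = sv H_{x_\beta}$ one checks the key algebraic identity $svH_{z_i}A^{ij}_{\alpha\beta}\vu^j_{x_\beta} = \tilde A_{\alpha\beta} v_{x_\beta}$ with $\tilde A_{\alpha\beta}:= a_{\alpha\beta} + c^i_{\alpha\beta}H_{z_i}$, and expanding $\partial_{x_\alpha}(svH_{z_i}) = s^2vH_{x_\alpha}H_{z_i} + sv H_{z_iz_k}\vu^k_{x_\alpha}$ gives
\begin{equation*}
v_t - (\tilde A_{\alpha\beta} v_{x_\beta})_{x_\alpha} = -s^2 v\,\tilde A_{\alpha\beta}H_{x_\alpha}H_{x_\beta} - sv\,A^{ij}_{\alpha\beta}H_{z_iz_k}\vu^k_{x_\alpha}\vu^j_{x_\beta}.
\end{equation*}
The rigorous version of this computation tests the weak form of \eqref{eq:strongly_coupled} against $svH_{z_i}\psi$ for $\psi\in C^\infty_c(\pDom)$ and proceeds through Steklov averages in $t$, exactly as in the proofs of Theorem~\ref{thm:flow_contraction} and Theorem~\ref{H2_estimate}.

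Next I verify that $\tilde A_{\alpha\beta}$ is strictly elliptic, which is needed to apply the Key Lemma. Plugging the rank-one tensor $\xi^i_\alpha := H_{z_i}\eta_\alpha$ into the strong ellipticity of $A^{ij}_{\alpha\beta}$ yields $\tilde A_{\alpha\beta}\eta_\alpha\eta_\beta \cdot |H_z|^2 \ge \lambda|H_z|^2|\eta|^2$, so $\tilde A$ is $\lambda$-elliptic wherever $H_z\neq 0$ and reduces to $a_{\alpha\beta}$ at the (isolated) zero of $H_z$; boundedness follows from boundedness of $\vu$. For the two error terms, the first is nonnegative and satisfies $\tilde A_{\alpha\beta}H_{x_\alpha}H_{x_\beta} \ge \lambda|\Grad_x H|^2$. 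For the second, split $A^{ij}_{\alpha\beta} = a_{\alpha\beta}\delta^{ij} + c^i_{\alpha\beta}H_{z_j}$. Setting $\Xi_{j\alpha}:=\vu^j_{x_\alpha}$, the $a$-piece equals $a_{\alpha\beta}H_{z_jz_k}\vu^k_{x_\alpha}\vu^j_{x_\beta} = \mathrm{tr}(a\,\Xi^T H_{zz}\,\Xi)$, and two applications of $\mathrm{tr}(BM)\ge\mu\,\mathrm{tr}(M)$ for $M\succeq0$, $B\succeq\mu I$ (using $a\succeq\lambda I_n$ and $H_{zz}\succeq\lambda I_N$) bound this below by $\lambda^2|\Grad\vu|^2$. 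The $c$-piece equals $c^i_{\alpha\beta}H_{x_\beta}(H_{zz}\Grad\vu)^i_\alpha$, so Cauchy--Schwarz and Young's inequality bound it in absolute value by $\varepsilon|\Grad\vu|^2 + C_\varepsilon|\Grad_x H|^2$ for any $\varepsilon>0$, with $C_\varepsilon$ depending only on $\varepsilon$, $\|c\|_\infty$, and $\|H_{zz}\|_\infty$.

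Choosing $\varepsilon = \lambda^2/2$ and assembling these estimates gives
\begin{equation*}
v_t - (\tilde A_{\alpha\beta}v_{x_\beta})_{x_\alpha} \le -(s^2\lambda - sC)\,v\,|\Grad_x H|^2 - \tfrac{s\lambda^2}{2}\,v\,|\Grad\vu|^2,
\end{equation*}
so picking $s$ large enough that $s\lambda \ge C$ kills the first term, and combining with the positive lower bound $v\ge \exp(s\inf H(\vu)) =: v_0 > 0$ (valid since $\vu$ is bounded) yields the claim with $c := s\lambda^2 v_0/2$. The main obstacle is the sign-indefinite cross term coming from the $c^i_{\alpha\beta}H_{z_j}$ piece of $A^{ij}_{\alpha\beta}$; the saving observation is that it is only \emph{linear} in $\Grad_x H$, so Young's inequality splits it into a small $|\Grad\vu|^2$ contribution (absorbed into the good $\lambda^2|\Grad\vu|^2$ from the $a$-piece) and a $|\Grad_x H|^2$ contribution that is absorbed by $s^2 v|\Grad_x H|^2$ once $s$ is taken large.
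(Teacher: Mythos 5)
Your proposal is correct and follows essentially the same route as the paper: compute $v_t$ from the system, recognize the flux $svH_{z_i}(a_{\alpha\beta}\u^i_{x_\beta}+c^i_{\alpha\beta}H_{x_\beta})$ as $(a_{\alpha\beta}+c^i_{\alpha\beta}H_{z_i})v_{x_\beta}$, and split the remaining term into the good $s^2v\,|\Grad H|^2$ and $sv\,|\Grad\vu|^2$ pieces plus a cross term handled by Young's inequality with $\epsilon$ small and then $s$ large. Your explicit verification of the ellipticity of $\tilde A_{\alpha\beta}$ via the rank-one test tensor is a detail the paper leaves implicit, but the argument is the same.
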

\begin{proof}
We compute
\begin{eqnarray*}
(e^{sH})_t  &=& se^{sH}H_{z_i}(a_{\alpha\beta}\u^i_{x_\beta} +
c^i_{\alpha\beta}H_{x_\beta})_{x_\alpha} \\
& = & (A_{\alpha\beta}(e^{sH})_{x_\beta})_{x_\alpha} -
(se^{sH}H_{z_i})_{x_\alpha}(a_{\alpha\beta}\u^i_{x_\beta} +
c^i_{\alpha\beta}H_{x_\beta}).
\end{eqnarray*}
The last term on the right becomes
\begin{multline*}
(se^{sH}H_{z_i})_{x_\alpha}(a_{\alpha\beta}\u^i_{x_\beta}
+c^i_{\alpha\beta}
H_{x_\beta})  =  s^2 e^{sH}A_{\alpha\beta}H_{x_\alpha} H_{x_\beta}  \\
+ se^{sH}H_{z_i z_j}a_{\alpha\beta}\u^i_{x_\alpha}\u^j_{x_\beta} +
se^{sH}H_{z_i z_j}c^i_{\alpha\beta}H_{x_\beta}\u^i_{x_\alpha} \\
\ge se^{sH}\left( \lambda s|\Grad H|^2 + \lambda |\Grad \vu|^2
-C(\epsilon)|\Grad H|^2 - \epsilon|\Grad \vu|^2\right) \ge
\frac{\lambda}{2} se^{sH}|\Grad \vu|^2.
\end{multline*}
The last inequality follows by first making $\epsilon$ small and then $s$ large. Since
$\vu$ is bounded, we have $H$ is bounded from below. Therefore, for some
$c$, $v$ satisfies equation (\ref{eq:strongly_coupled_scalar}) as claimed.
\qed\end{proof}
At this point we immediately conclude that conditions of Theorem \ref{thm:entropy_condition} are satisfied. Therefore we have established
\begin{theorem}
Bounded weak solutions of (\ref{eq:strongly_coupled}) are
H\"older continuous in $\pDom$.
\end{theorem}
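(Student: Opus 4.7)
The plan is to observe that at this point in the paper nearly all the real work has already been done, and what remains is to assemble the pieces. Concretely, I would apply Theorem \ref{thm:entropy_condition} to $\vu$, using the function $v = e^{sH(\vu)}$ constructed in the preceding Lemma as the entropy $\phi$.

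First, I would verify that the strongly coupled system \eqref{eq:strongly_coupled} fits the framework of the quasi-linear system \eqref{eq:nonlinear_system}. Writing out the divergence term and using $H_{x_\beta} = H_{z_j}\u^j_{x_\beta}$, we rewrite \eqref{eq:strongly_coupled} as
\begin{equation*}
\u^i_t - \bigl(A^{ij}_{\alpha\beta}(\vu)\u^j_{x_\beta}\bigr)_{x_\alpha} = 0,
\qquad A^{ij}_{\alpha\beta} := a_{\alpha\beta}\delta^{ij} + c^i_{\alpha\beta}H_{z_j}(\vu),
\end{equation*}
which matches \eqref{eq:nonlinear_system}. By hypotheses (2) and (4) on the strongly coupled system, $A^{ij}_{\alpha\beta}$ is strongly elliptic in the sense of \eqref{eq:strong_ellipticity} and bounded (the latter using boundedness of $\vu$ together with continuity of $H_{z_j}$), and continuity of $A^{ij}_{\alpha\beta}$ in $(x,\vu)$ follows from the assumed continuous differentiability of $H$. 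Thus $\vu$ is a weak solution of a quasi-linear system satisfying the hypotheses of Theorem \ref{thm:local_regularity}, and the $\Lp{p}$ estimate of Lemma \ref{lem:lp_estimate} applies to $\Grad\vu$.

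Next, the preceding Lemma provides $v = e^{sH(\vu)}\in \V(\pDom)$ that weakly satisfies
\begin{equation*}
v_t - (A_{\alpha\beta}v_{x_\beta})_{x_\alpha} + c|\Grad\vu|^2 \le 0,
\end{equation*}
with $c > 0$ and $A_{\alpha\beta}$ bounded and strictly elliptic (since $a_{\alpha\beta}$ is, and the extra factor from $A^{ij}_{\alpha\beta}$ is harmless for the scalar inequality — the relevant ellipticity here is just that of the scalar operator arising for $v$, which boundedness of $\vu$ and strict convexity of $H$ yield). This is exactly the entropy hypothesis of Theorem \ref{thm:entropy_condition}, with $\lambda$ replaced by $c$.

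The main (and only) obstacle is checking that the ambient coefficients produced when differentiating $e^{sH(\vu)}$ really are bounded and strictly elliptic in the scalar sense required by Theorem \ref{thm:entropy_condition} — and this is already explicit in the preceding computation, where boundedness of $\vu$ makes $e^{sH}$ and its derivatives uniformly controlled, and $a_{\alpha\beta}$ supplies the ellipticity for the scalar equation satisfied by $v$. With this verified, Theorem \ref{thm:entropy_condition} applies directly and delivers everywhere H\"older continuity of $\vu$ on the interior of $\pDom$, completing the proof.
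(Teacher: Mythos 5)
Your proposal is correct and follows essentially the same route as the paper, which at this point simply observes that the hypotheses of Theorem \ref{thm:entropy_condition} are met by the entropy $v=e^{sH}$ from the preceding Lemma and concludes. Your additional verification that the strongly coupled system fits the quasi-linear framework \eqref{eq:nonlinear_system} (so that Lemma \ref{lem:lp_estimate} applies to $|\Grad\vu|^2$) and that the scalar coefficients for $v$ are bounded and strictly elliptic is exactly the bookkeeping the paper leaves implicit.
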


\bibliography{evreg}
\bibliographystyle{amsplain}

\end{document}